\documentclass[10pt,twoside,english,reqno,a4paper]{amsart}
\usepackage{listings,graphicx,amsmath,varioref,amscd,amssymb,color,bm,stmaryrd,amsthm,amsfonts,graphics,geometry,latexsym,pgf,pst-all} 
\setlength{\parindent}{0pt}
\theoremstyle{plain}
\usepackage{esint}
\usepackage{amsthm}

\usepackage{hyperref}
\usepackage{cancel}

\theoremstyle{plain}
\newtheorem{theorem}{Theorem}[section]

\newtheorem{lemma}[theorem]{Lemma}

\newtheorem{corollary}[theorem]{Corollary}

\usepackage{geometry}
\geometry{
	a4paper,
	left=27mm,
	right=27mm,
	top=30mm,
	bottom=30mm,
}


\theoremstyle{definition}
\newtheorem{defin}[theorem]{Definition}

\theoremstyle{remark}



\def\bk{\color{black}}

\numberwithin{equation}{section}

\DeclareMathOperator{\R}{\mathbb{R}}

\newcommand{\car}[1]{\raise1pt\hbox{$\chi$}_{#1}}

\def\re{\mathbb{R}}
\def\dys{\displaystyle}

\begin{document}
	
	\title[Existence and non-existence for elliptic equations with singular reactions]{Existence and non-existence phenomena for nonlinear elliptic equations with $L^1$ data and singular reactions}

	\author[F. Oliva]{Francescantonio Oliva}
	\author[F. Petitta]{Francesco Petitta}
	\author[M. F. Stapenhorst]{Matheus F. Stapenhorst}
	\address{Francescantonio Oliva
		\hfill \break\indent
		Dipartimento di Scienze di Base e Applicate per l' Ingegneria, Sapienza Universit\`a di Roma
		\hfill \break\indent
		Via Scarpa 16, 00161 Roma, Italy}
	\email{\tt francescantonio.oliva@uniroma1.it}
	\address{Francesco Petitta
		\hfill \break\indent
		Dipartimento di Scienze di Base e Applicate per l' Ingegneria, Sapienza Universit\`a di Roma
		\hfill \break\indent
		Via Scarpa 16, 00161 Roma, Italy}
	\email{\tt francesco.petitta@uniroma1.it}
	\address{Matheus Stapenhorst
		\hfill \break\indent
		Departamento de Matemática e Computação, Universidade Estadual Paulista-Unesp
		\hfill \break\indent
		R. Roberto S\'imonsen 305, 19060-900 Presidente Prudente-SP, Brazil}
	\email{\tt m.stapenhorst@unesp.br}
	\keywords{Existence and non-existence,  $p$-Laplacian, reaction-diffusion equations, $L^1$ data, singular problems} \subjclass[2020]{35J25, 35K57, 35J60,  35J75}	
\maketitle	
\begin{abstract}
We study existence and non-existence of solutions for singular elliptic boundary value problems as
\begin{equation}\label{eintro}\begin{cases}\tag{1}
		\displaystyle -\Delta_p u+ \frac{a(x)}{u^{\gamma}}=\mu f(x) \ &\text{ in }\Omega,\\
		u>0&\text{ in }\Omega,\\
		u =  0 \ &\text{ on }
		\partial\Omega,
		\end{cases}
\end{equation}
where $\Omega$ is a smooth bounded open subset of $\re^N$ ($N\ge 2$),  $\Delta_p u$ is the $p$-Laplacian with $p>1$,   $0<\gamma\leq 1$,  and $a\geq0$ is bounded and non-trivial.  For any  positive $ f\in L^{1}(\Omega)$ we show that problem \eqref{eintro} is solvable for  any  $\mu >\mu_0>0$, for some $\mu_0$ large enough. As a  reciprocal outcome  we also show that no finite energy solution  exists if $0<\mu<\mu_{0*}$, for some small $\mu_{0*}$. 
 This paper extends the celebrated one of J. I. Diaz, J. M. Morel and L. Oswald (\cite{DMO}) to the case $p\neq2$.  Our result is also  new for $p=2$ provided   the singular term has a critical growth near zero (i.e. $\gamma=1$). 
\end{abstract}

\tableofcontents

\section{Introduction}

Consider the elliptic boundary value  problem 
\begin{equation}\label{introproblem}
	\begin{cases}
		\displaystyle -\Delta_p u+ \frac{a(x)}{u^{\gamma}}=\mu f(x) \ &\text{ in }\Omega,\\
		u>0&\text{ in }\Omega,\\
		u =  0 \ &\text{ on }
		\partial\Omega,
	\end{cases}
\end{equation}
with $p>1$, $\gamma>0$, $a\geq0$ bounded and non-trivial, $f\in L^1(\Omega)$, and $\mu>0$. 
Here $\Omega$ is a smooth bounded open subset of $\re^N$ ($N\ge 2$) and $\Delta_p u= {\rm div} (|\nabla u|^{p-2}\nabla u)$ is the usual $p$-Laplace operator. 

Singular problems as \eqref{introproblem} for $p=2$, i.e. 
\begin{equation}\label{DMOeq}
	\begin{cases}
		\displaystyle -\Delta u+ \frac{a(x)}{u^{\gamma}}=\mu f(x) \ &\text{ in }\Omega,\\
		u>0&\text{ in }\Omega,\\
		u =  0 \ &\text{ on }
		\partial\Omega,
	\end{cases}
\end{equation}
were largely studied in literature both for its applications to non-Newtonian fluids (\cite{nc,fulks}) and for their pure mathematical interest in the context of  singular Lane-Emdem-Fowler semilinear problems. Problem \eqref{DMOeq} in case of  a nonpositive $a(x)$ is nowadays completely understood since the pioneering papers by \cite{crt,stuart,LMcK} (see also   \cite{cc1, cc2, SWL, HSS, BocOrs,OP1,OP}, and \cite{OPs} for a gentle introduction on the subject). 

From a mathematical point of view and when $a(x)$ is nonpositive, the strategy on proving existence in the above literature  mostly relies in constructing a suitable barrier from below for a certain approximating sequence. This procedure applies immediately after an application of the strong maximum principle as the suitable approximating sequence is chosen to be nondecreasing. 
On the other hand, dealing with a nonnegative function $a(x)$ in \eqref{DMOeq} is  more challenging and less studied as the above construction of the barrier does not, in general, simply extend to this case. Here it is  necessary a more sophisticated control from below of the approximating sequence near the boundary of the domain in order to  deal with the singular term $a(x)u^{-\gamma}$. As we will see this barrier from below is essentially constructed as a power of  the \bk first Dirichlet eigenfunction $\varphi_1$  on  $\Omega$.

\medskip 

 The case $a\equiv 1$, $\mu=1$,  and $0<\gamma<1$ was studied in the celebrated paper  \cite{DMO} where existence and non-existence results were shown provided suitable, respectively,   smallness or largeness assumptions are assumed on $f$. Among other results, by a delicate study  of the inverse of $-\Delta$ and the introduction of a convenient weak notion of sub and super-solutions the authors of \cite{DMO} prove in fact that a solution exists if 
 $$
 \int_\Omega f\varphi_1 dx \geq  \overline{C},
 $$
 for a suitable constant $\overline{C}>0$. 
 
 For $\mu> 0$ in \eqref{DMOeq}, this latter condition translates into the fact that there exists $\mu_0$ such that  a solution of  \eqref{DMOeq} does exist provided $\mu>\mu_0$.  Conversely, for $\mu$ small enough a non-existence result also holds\bk;  we remark that \bk a solution  $u$ \bk of  \eqref{DMOeq} is asked in particular  to  satisfy that  ${u^{-\gamma}}$ belongs to $L^{1}(\Omega)$. 
 
 \medskip 
 
  Also, in \cite{CLM}, the authors showed non-existence of classical solutions of \eqref{DMOeq} when $\gamma\geq1$. A problem with sublinear nonlinearity of the form $\mu u^q$ was also studied (\cite{Z}).  More \bk recently, in \cite{PRR}, the case of  a generic Caratheodory function $f(x,u)$ as source  was considered.\bk 

\medskip 
Let us also mention that  there are contexts in which non-strictly positive solutions are obtained, thus leading to the existence of a free-boundary. In \cite{DavMont} (see also \cite{ap}) for instance, the authors considered problems of the type
\begin{equation}\label{Monteq}
	\begin{cases}
		\displaystyle -\Delta u=\left(-\frac{1}{u^{\gamma}}+\lambda f(x,u)\right)\chi_{\{u>0\}} \ &\text{ in }\Omega,\\
		u =  0 \ &\text{ on }
		\partial\Omega.
	\end{cases}
\end{equation}
With suitable assumptions on $f$, for $\lambda$ large, the authors proved existence of a strictly positive solution $u_{\lambda}$. When $\lambda$ is small it was shown that the set $\{u_{\lambda}=0\}$ has positive measure which gives rise to a non-trivial free-boundary. In the context of reaction-diffusion problems, this phenomenon is associated with the existence of a region in which no reaction takes place. See \cite{Aris} and \cite{DiazBook} for further  details. 

\medskip 
Problem \eqref{Monteq} with $f(x,s)=s^{\theta}$, $0<\theta<1$, was considered in \cite{MS}. Using a smooth positive subsolution, the authors obtained a positive solution for large   $\lambda$'s. See also \cite{DavMontConc} and \cite{GR}.  Problems as \eqref{DMOeq} in the presence of \bk gradient terms were also considered in  the \bk literature (see for instance \cite{B, a6, GR2} and references therein for further details).

\medskip
 Turning back to the case $p\not=2$, problems of the form
\begin{equation*}\label{LMeq}
	\begin{cases}
		\displaystyle -\Delta_p u+ \frac{a(x)}{u^{\gamma}}=f(x,u) \ &\text{ in }\Omega,\\
		u>0&\text{ in }\Omega,\\
		u =  0 \ &\text{ on }
		\partial\Omega,
	\end{cases}
\end{equation*}
have been extensively studied in the literature as we mentioned in case $a(x)\leq 0$ (see also \cite{CST,PRR2, PW,OPs}).

\medskip 

In this paper our aim is  to prove \bk existence and non-existence results for problem \eqref{introproblem}, extending, as a consequence, \bk  the result of  \cite{DMO} for $p=2$,  $0<\gamma<1$, and $a\equiv 1$. The  existence result \bk will follow after a suitable sub- and supersolution argument.

Let us stress that we are also able to extend our result to the critical case  $\gamma=1$ under suitable compatibility assumptions on the data.

\medskip 
 As we already mentioned, singular problems naturally appear in the study of pseudoplastic non-Newtonian fluids. The particular case of a reaction term  in \eqref{DMOeq} (i.e. $a(x)\geq 0$)  is of particular interest in reaction-diffusion problems for permeable catalysts (see \cite{Aris}). There are also applications in Micro-Electromechanical Systems (MEMS), where the parameter $\mu$ denotes the applied voltage and the function $f$ plays the role of the varying dielectric permittivity of the elastic membrane $\Omega$. See \cite{MEMS} for more details.
 
 \medskip

 The plan of the paper is  as follows: in Section \ref{sec:prel} we set the notations and some preliminaries which will be useful in the sequel. In Section \ref{sec:main} we state the main assumptions and existence result in case $0<\gamma<1$. We also give a non-existence result provided $\mu$ is small enough.
 In Section \ref{sec_ex<1} we prove the results presented in Section \ref{sec:main}. Finally, in Section \ref{sec:gamma=1} we  discuss the critical case $\gamma=1$.
 
\subsection{Notation and preliminaries}
\label{sec:prel}

In the entire paper $\Omega\subset\mathbb{R}^{N}$ ($N\ge 2$) denotes an open bounded set with smooth boundary. By $|\Omega|$ we denote the size of $\Omega$ with respect to the $N$-dimensional Lebesgue measure.

The distance of a point $x\in \Omega$ from the boundary of $\Omega$ is denoted by  
$$\delta(x):= \operatorname{dist}(x, \partial \Omega),$$
and it is frequently used that, as $\partial \Omega$ is smooth, it holds
\begin{equation}\label{distint}
\int_{\Omega} \delta^r (x) dx<\infty \ \text{if and only if }r>-1.
\end{equation}
Let us stress that in order to avoid technicalities, without loosing generality and with a little  abuse of notation, we will refer to $\delta(x)$ as a suitable positive smooth (say $C^{1}$) modification of the distance function which agrees with  $\delta (x)$ in a fixed neighbourhood of $\partial \Omega$.  

In particular, for $\varepsilon >0$,  we denote by 
\begin{equation}\label{bordo} 
	\Omega_\varepsilon:=\{x\in \Omega: \delta(x)<\varepsilon\}
\end{equation}	 
an $\varepsilon$-neighbourhood of $\partial\Omega$. 

\begin{figure}[htbp]\centering
	\includegraphics[width=3in]{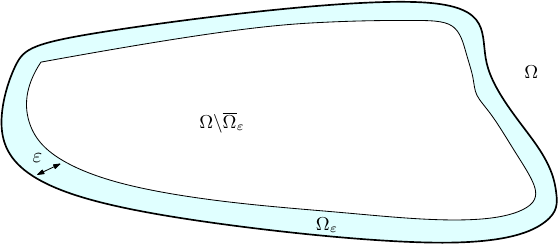}
	\caption{The $\varepsilon$-neighborhood of $\partial\Omega$}\label{oe}
\end{figure}

In the sequel we make use of the standard truncation function defined, for any fixed $k>0$, as  
$$T_k(s)=\max (-k,\min (s,k)),$$
for any   $s\in\re$.

As we will deal with measurable functions not necessarily belonging to $W^{1,1}(\Omega)$\bk, we will use the following result which clarifies the concept of weak derivative for functions having the truncation at any level belonging to a Sobolev space. 
\begin{lemma}\cite[Lemma 2.1]{B1995}\label{l.bocc}
	Let $u:\Omega\to\mathbb{R}$ be a measurable function such that $T_{k}(u)\in W^{1,1}_{loc}(\Omega)$ for every $k>0$. Then, there exists a measurable vectorial  function $v:\Omega\to\mathbb{R}^{N}$ such that
	$$
	\nabla T_{k}(u)=v\chi_{\{|u|\leq k\}}\text{ for a.e. }x\in\Omega\text{ and for every }k>0.
	$$
	Furthermore, $u\in W^{1,1}_{loc}(\Omega)$ if and only if  $v\in L^{1}_{loc}(\Omega)$ \bk and then $v=\nabla u$ in the usual distributional sense.
\end{lemma}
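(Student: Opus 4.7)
The overall strategy is to build $v$ by gluing together the gradients $\nabla T_k(u)$ at various levels $k$, leveraging the elementary fact that if $f\in W^{1,1}_{loc}(\Omega)$ is constant on a measurable set $E$, then $\nabla f=0$ a.e.\ on $E$.

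First I would establish the compatibility relation: for $0<k<h$,
$$
\nabla T_k(u)\,\chi_{\{|u|\le k\}} \;=\; \nabla T_h(u)\,\chi_{\{|u|\le k\}} \qquad \text{a.e.\ in } \Omega.
$$
This follows from the chain rule applied to the Lipschitz composition $T_k\circ T_h(u)$ (with $T_h(u)\in W^{1,1}_{loc}(\Omega)$): one checks that $T_k(u)=T_k(T_h(u))$ pointwise and then uses $T_k'(s)=\chi_{\{|s|<k\}}$. An alternative is the Stampacchia-type principle applied to $T_k(u)-T_h(u)$, which vanishes identically on $\{|u|\le k\}$. The same principle, applied to $T_k(u)\equiv \pm k$ on the constant-sign pieces of $\{|u|>k\}$, yields $\nabla T_k(u)=0$ a.e.\ there. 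These two facts allow one to define, at a.e.\ point $x$ where $u$ is finite, $v(x):=\nabla T_k(u)(x)$ for any choice of $k>|u(x)|$; the prescription is independent of $k$, produces a measurable field, and satisfies $\nabla T_k(u)=v\,\chi_{\{|u|\le k\}}$ a.e.\ for every $k>0$.

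For the equivalence, the forward direction is the classical chain rule in $W^{1,1}_{loc}(\Omega)$, which yields $\nabla T_k(u)=\chi_{\{|u|\le k\}}\nabla u$ and hence $v=\nabla u\in L^1_{loc}(\Omega)$. Conversely, assume $v\in L^1_{loc}(\Omega)$ and fix a ball $B\Subset\Omega$. Setting $m_k:=(T_k(u))_B$, the Poincaré inequality gives
$$
\|T_k(u)-m_k\|_{L^1(B)} \le C\,\|\nabla T_k(u)\|_{L^1(B)} \le C\,\|v\|_{L^1(B)},
$$
so $T_k(u)-m_k$ is bounded in $W^{1,1}(B)$. Rellich compactness produces (along a subsequence) some $w\in W^{1,1}(B)$ with $T_k(u)-m_k\to w$ in $L^1(B)$; combined with the a.e.\ convergence $T_k(u)\to u$, this forces $m_k\to u-w$ a.e., and since $m_k$ is a sequence of constants, $u-w$ is a.e.\ constant on $B$. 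Hence $u\in W^{1,1}(B)$, and the identification $\nabla u=v$ is obtained by passing to the limit in $\nabla T_k(u)=v\,\chi_{\{|u|\le k\}}$ via dominated convergence with dominant $|v|$.

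The principal obstacle is this converse implication: without a priori $L^1_{loc}$ control on $u$ itself the distributional derivative of $u$ is not even defined, so the Poincaré-plus-normalisation argument on the sequence $T_k(u)-(T_k(u))_B$ is the delicate point and carries the real content of the statement.
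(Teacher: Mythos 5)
The paper states this lemma as a citation from \cite{B1995} without reproducing the argument, so there is no in-paper proof to compare against; your proof is correct and follows the standard route of that reference. The two ingredients you use are exactly the canonical ones: the Stampacchia principle to show that $\nabla T_k(u)$ vanishes a.e.\ on $\{|u|\ge k\}$ and that the gradients at different truncation levels agree on $\{|u|\le k\}$, which makes the gluing that defines $v$ well posed; and, for the converse, the mean-normalisation $T_k(u)-(T_k(u))_B$ together with Poincar\'e, which is precisely the device that circumvents the lack of an a priori $L^1_{loc}$ bound on $u$. One small imprecision: Rellich--Kondrachov gives only $w\in L^1(B)$, not $w\in W^{1,1}(B)$, since $W^{1,1}$ is non-reflexive and boundedness there does not produce a weakly convergent subsequence of the gradients. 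The membership $w\in W^{1,1}(B)$ with $\nabla w=v$ instead follows from the stronger fact that $\nabla T_k(u)=v\chi_{\{|u|\le k\}}\to v$ strongly in $L^1(B)$ by dominated convergence (with dominant $|v|\in L^1(B)$), combined with the $L^1(B)$ convergence of $T_k(u)-m_k$; you do invoke this dominated-convergence step at the very end to identify $\nabla u$, so the argument is complete once that observation is moved up to justify $w\in W^{1,1}(B)$. Indeed, once one notices the strong $L^1$ convergence of the gradients, Poincar\'e already shows that $T_k(u)-m_k$ is Cauchy in $W^{1,1}(B)$, and Rellich can be dispensed with entirely.
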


From now on, for a measurable function $u$ such that $T_k(u)\in W^{1,1}_{\rm loc}(\Omega)$ for every $k>0$, with $\nabla u$ we mean the function $v$ defined in Lemma \ref{l.bocc}.

Let  us \bk also recall that a function $u: \Omega \to \R$ belongs to the  Marcinkiewicz space (also called {\it weak Lebesgue space}) $M^s(\Omega)$, with $s>0$, if there exists a positive constant $c$ such that
\begin{equation*}
	\label{marc-eq}
	|\{x\in \Omega : |u(x)| > \lambda\}|\leq \frac{c}{\lambda^s} \qquad \forall \lambda >0.
\end{equation*}
It is known that $L^1(\Omega)\subset M^1(\Omega)$ and $L^{s}(\Omega)\subset M^{s}(\Omega) \subset L^{s-\varepsilon}(\Omega)$ for every $s>1$ and $0<\varepsilon \leq s-1$.

\medskip

We will use the following classical comparison principle:

\begin{lemma}{\cite[Theorem 2.15]{Lindq}}\label{lem comparison}
	Let $u_{1},u_{2}\in W_{0}^{1,p}(\Omega)$ be such that
	\begin{equation*}\label{comp ineq}
		\int_{\Omega}|\nabla u_{1}|^{p-2}\nabla u_{1}\cdot \nabla v\leq \int_{\Omega}|\nabla u_{2}|^{p-2}\nabla u_{2}\cdot\nabla v \ \text{ for all }v\in W_{0}^{1,p}(\Omega),  v\geq0.
	\end{equation*}
	Then $u_{1}\leq u_{2}$  a.e. \bk in $\Omega$.
\end{lemma}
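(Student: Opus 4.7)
The plan is to test the given variational inequality against the natural choice $v=(u_{1}-u_{2})^{+}$. Since $u_{1},u_{2}\in W_{0}^{1,p}(\Omega)$ one has $u_{1}-u_{2}\in W_{0}^{1,p}(\Omega)$, and positive parts preserve $W_{0}^{1,p}(\Omega)$ (by the standard chain rule for truncations), so $v$ is an admissible nonnegative test function. Applying the assumed inequality with this $v$ to both sides and rearranging yields
\begin{equation*}
\int_{\Omega}\bigl(|\nabla u_{1}|^{p-2}\nabla u_{1}-|\nabla u_{2}|^{p-2}\nabla u_{2}\bigr)\cdot \nabla (u_{1}-u_{2})^{+}\,dx\leq 0.
\end{equation*}

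The next step is to note that $\nabla (u_{1}-u_{2})^{+}=\chi_{\{u_{1}>u_{2}\}}\,(\nabla u_{1}-\nabla u_{2})$, so the integrand can be recognized as the monotonicity defect of the vector field $\xi\mapsto |\xi|^{p-2}\xi$ evaluated on $\xi=\nabla u_{1}$ and $\eta=\nabla u_{2}$, restricted to the set $\{u_{1}>u_{2}\}$. At this point I would invoke the classical pointwise inequality
\begin{equation*}
\bigl(|\xi|^{p-2}\xi-|\eta|^{p-2}\eta\bigr)\cdot(\xi-\eta)\geq 0\qquad\text{for all }\xi,\eta\in\R^{N},
\end{equation*}
with strict inequality whenever $\xi\neq\eta$; this is the standard strict monotonicity of the $p$-Laplacian, proved via the quantitative estimates $c_{p}|\xi-\eta|^{p}$ (if $p\geq 2$) or $c_{p}|\xi-\eta|^{2}(|\xi|+|\eta|)^{p-2}$ (if $1<p<2$).

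Combining the two facts, the integrand is nonnegative almost everywhere while the integral is nonpositive, so the integrand must vanish almost everywhere. By the strict form of the monotonicity inequality, this forces $\nabla u_{1}=\nabla u_{2}$ a.e.\ on $\{u_{1}>u_{2}\}$, hence $\nabla(u_{1}-u_{2})^{+}=0$ a.e.\ in $\Omega$. Since $(u_{1}-u_{2})^{+}\in W_{0}^{1,p}(\Omega)$, the Poincaré inequality then yields $(u_{1}-u_{2})^{+}\equiv 0$, i.e.\ $u_{1}\leq u_{2}$ a.e., which is the desired conclusion.

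The only delicate point is the case $1<p<2$, where the monotonicity inequality degenerates near $\xi=\eta=0$; one has to argue that the degenerate weight $(|\nabla u_{1}|+|\nabla u_{2}|)^{p-2}$ can still be used to conclude $\nabla u_{1}=\nabla u_{2}$ where it matters, which is standard but is the main technical subtlety worth being careful about.
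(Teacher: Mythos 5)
Your argument is correct and is precisely the standard proof of the weak comparison principle for the $p$-Laplacian: test with $v=(u_{1}-u_{2})^{+}$, invoke the strict monotonicity of $\xi\mapsto|\xi|^{p-2}\xi$, and conclude via Poincar\'e. The paper does not give its own proof but simply cites \cite[Theorem 2.15]{Lindq}, and the argument there is the same one you have reproduced; your caution about the degenerate case $1<p<2$ is well-placed but harmless here, since one only needs the qualitative statement that the monotonicity defect vanishes iff $\nabla u_{1}=\nabla u_{2}$, which holds for every $p>1$.
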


Let us consider the Rayleigh quotient
\begin{equation}\label{e.rayleigh}
	\dys \lambda_{p}=\min_{u\in W_{0}^{1,p}(\Omega),u\neq0}\frac{\int_{\Omega}|\nabla u|^p}{\int_{\Omega}|u|^{p}}, 
\end{equation}
which is the Dirichlet eigenvalue problem associated with the $p$-Laplacian. It is known that the ${\rm argmin} (\lambda_p)$  of \eqref{e.rayleigh}  is the first eigenfunction  $0<\varphi_{1}\in C^{1}(\overline{\Omega})$ which solves
\begin{equation}\label{e.autofunc}
	\begin{cases}
		\displaystyle -\Delta_p \varphi_{1}=\lambda_{p}\varphi_{1}^{p-1} & \text{ in }\Omega,\\
		\varphi_{1} =  0 &\text{ on }
		\partial\Omega.
	\end{cases}
\end{equation}

  \smallskip 

Also observe that, as an easy consequence of the Hopf Lemma, there exist constants $c_{1},c_{2}>0$ such that
\begin{equation}\label{phi equiv dist }
c_{1}\delta(x)\leq \varphi_{1}(x)\leq c_{2}\delta(x)\text{ for all }x\in\Omega.
\end{equation}

Finally,  for the sake of simplicity,  we use the simplified notation $$\int_{\Omega} f:=\int_{\Omega} f(x)dx,$$ when referring to integrals when no ambiguity is possible.

\section{Main assumptions and results for $0<\gamma<1$}
\label{sec:main}

Here we address the existence and non-existence of nonnegative  solutions to
\begin{equation}\label{e.problem}
\begin{cases}
	\displaystyle -\Delta_p u+ \frac{a(x)}{u^{\gamma}}=\mu f(x) \ &\text{ in }\Omega,\\
	u =  0 \ &\text{ on }
	\partial\Omega,
\end{cases}
\end{equation}
where $p>1$, $0<\gamma  <  1$, $\mu>0$, $0\lneq a\in L^{\infty}(\Omega)$ and $f\in L^1(\Omega)$ satisfies 
\begin{equation}\label{e.hypf1}
	\forall \ \omega \subset\subset \Omega\  \ \exists\  c_\omega>0: \ f(x)\geq c_\omega \ \ \text{for a.e. }\ x\in\omega.
\end{equation}
 This condition ensures that $f$ is a.e. bounded away from zero on any set that is compactly embedded in $\Omega$. Yet, it allows for $f$ to be equal to zero on $\partial\Omega$. \bk 
We  now \bk precise what we mean by a solution to \eqref{e.problem}.

\begin{defin}\label{def}
	A nonnegative measurable function $u$ is a distributional solution of \eqref{e.problem} if $T_k(u)\in W^{1,1}_0(\Omega)$ for every $k>0$, if both $|\nabla u|^{p-1}$ and  $au^{-\gamma}$ belong to  $L^1_{\rm loc}(\Omega)$,  and 
	\begin{equation}\label{e.defweaksol}
		\int_{\Omega}|\nabla u|^{p-2}\nabla u\cdot \nabla\varphi+\int_{\Omega}\frac{a\varphi}{u^{\gamma}}=\mu\int_{\Omega}f\varphi\ \ \text{ for all }\varphi\in C_{c}^{1}(\Omega).
	\end{equation}
	
\end{defin}

We underline that in the sequel $u$ is meant to be a \textit{finite energy solution} to \eqref{e.problem} if $u$ is a solution in the sense of Definition \ref{def} and it belongs to $W^{1,p}_0(\Omega)$; otherwise we may  refer to it as an \textit{infinite energy solution} or simply as a solution in the sense of Definition \ref{def}.

\medskip

Let us state our  main existence theorem.

\begin{theorem}\label{teo_existence}
	Let $ 1<p<N\bk$, $0<\gamma<1$, $0\lneq a\in L^{\infty}(\Omega)$ and let $f\in L^q(\Omega)$ be a function satisfying \eqref{e.hypf1}. Then there exists a positive $\mu_0$ such that for any $\mu\ge \mu_0$ the following holds:  
	\begin{enumerate}
		\item[i)] if $q=\frac{Np}{Np-N+p}$ then there exists a positive finite energy solution $u$ to \eqref{e.problem};
		\item[ii)]  if $q=1$ then there exists a positive infinite energy solution $u$ to \eqref{e.problem} such that $u\in M^{\frac{N(p-1)}{N-p}}(\Omega)$ and $|\nabla u|\in M^{\frac{N(p-1)}{N-1}}(\Omega)$.
	\end{enumerate}
Furthermore $au^{-\gamma}$ belongs to $L^1(\Omega)$,   and there exists $t_0>0$, depending only on $\Omega, a,  \gamma$ and $p$,  such that $$u\geq t_0\varphi_1^{\frac{p}{p-1+\gamma}} \ \text{a.e. in }\ \ \Omega,$$ where $\varphi_1$ is given by \eqref{e.autofunc}. \bk
\end{theorem}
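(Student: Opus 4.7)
My plan is to follow a standard approximation scheme tailored to the singular structure of the equation, with the key ingredient being a subsolution built from the first Dirichlet eigenfunction $\varphi_{1}$. First I would introduce the approximating problems
\begin{equation*}
\begin{cases}
-\Delta_p u_n + \dfrac{a(x)}{(u_n^+ + 1/n)^\gamma} = \mu f_n & \text{in } \Omega, \\
u_n = 0 & \text{on } \partial\Omega,
\end{cases}
\end{equation*}
with $f_n := T_n(f)\in L^\infty(\Omega)$. Since the zero-order term is bounded in $s$ and the datum is bounded, a classical Schauder fixed-point argument produces a nonnegative $u_n\in W^{1,p}_0(\Omega)\cap L^\infty(\Omega)$.

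The core step is the construction of a uniform lower barrier. Setting $\alpha := p/(p-1+\gamma)$ (so that $\alpha(p-1)+\alpha\gamma=p$, and $\alpha>1$ since $\gamma<1$), a direct computation using $-\Delta_p\varphi_1 = \lambda_p \varphi_1^{p-1}$ yields, for $w := t_0 \varphi_1^\alpha$,
$$-\Delta_p w + \frac{a(x)}{w^\gamma} \;=\; \varphi_1^{-\alpha\gamma}\!\left[\frac{a(x)}{t_0^{\gamma}} - (t_0\alpha)^{p-1}(\alpha-1)(p-1)|\nabla\varphi_1|^p\right] + (t_0\alpha)^{p-1}\lambda_p\,\varphi_1^{p-\alpha\gamma}.$$
Choosing $\varepsilon>0$ small so that Hopf's lemma gives $|\nabla\varphi_1|\geq c_3>0$ in $\Omega_\varepsilon$ and taking $t_0$ large, the bracketed expression becomes nonpositive there; shrinking $\varepsilon$ further absorbs the bounded last term (which is of order $\varphi_1^{p-\alpha\gamma}$, negligible compared to $\varphi_1^{-\alpha\gamma}$ near $\partial\Omega$), giving $-\Delta_p w + a/w^\gamma \leq 0$ on $\Omega_\varepsilon$. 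On the compactly contained set $\Omega\setminus\Omega_\varepsilon$ the left-hand side is bounded by some constant $C$, and \eqref{e.hypf1} provides $f\geq c_{\Omega\setminus\Omega_\varepsilon}>0$ there; choosing $\mu_0$ appropriately large makes $\mu f\geq C$ for $\mu\geq \mu_0$. Since $(w+1/n)^{-\gamma}\leq w^{-\gamma}$ and $f_n$ inherits the interior positivity for $n$ large, $w$ is a subsolution of the approximating problem. Testing the difference of the two equations against $(w-u_n)^+\in W^{1,p}_0(\Omega)$ and exploiting Lemma \ref{lem comparison} together with the monotonicity of $s\mapsto(s+1/n)^{-\gamma}$ yields $u_n\geq t_0\,\varphi_1^\alpha$ a.e., uniformly in $n$.

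Next I would derive a priori estimates. The lower bound gives $a/(u_n+1/n)^\gamma \leq a\,t_0^{-\gamma}\varphi_1^{-\alpha\gamma}$, which lies in $L^1(\Omega)$ thanks to \eqref{phi equiv dist } and \eqref{distint} since $\alpha\gamma = p\gamma/(p-1+\gamma)<1$. In case (i), testing the equation for $u_n$ against $u_n$ and combining with the Sobolev embedding $W^{1,p}_0\hookrightarrow L^{p^*}$ and H\"older's inequality on the right-hand side (noting that $(p^*)'=Np/(Np-N+p)$) gives a uniform $W^{1,p}_0$ bound. In case (ii), testing instead with $T_k(u_n)$ and running the Boccardo--Gallou\"et scheme, with the singular term reabsorbed by the $L^1$ dominator, yields the Marcinkiewicz estimates $u_n\in M^{N(p-1)/(N-p)}(\Omega)$ and $|\nabla u_n|\in M^{N(p-1)/(N-1)}(\Omega)$.

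Finally I would pass to the limit. Up to subsequence $u_n\to u$ a.e.\ and in the relevant weak topology, with $u\geq t_0\,\varphi_1^\alpha>0$ in $\Omega$. The main obstacle, as is typical in this class of problems, is the a.e.\ convergence of the gradients $\nabla u_n \to \nabla u$; this I would obtain via a Boccardo--Murat type argument, testing with truncations $T_k(u_n - u)$ and exploiting monotonicity of the $p$-Laplacian together with the sign of the singular contribution (which is uniformly controlled by the $L^1$ dominator identified above). Vitali's theorem then upgrades this to strong local convergence of $|\nabla u_n|^{p-2}\nabla u_n$, while the singular term passes to the limit by dominated convergence. Plugging everything into \eqref{e.defweaksol} shows that $u$ is a solution in the sense of Definition \ref{def}; it lies in $W^{1,p}_0(\Omega)$ in case (i) by weak lower semicontinuity and in the stated Marcinkiewicz spaces in case (ii), $au^{-\gamma}\in L^1(\Omega)$ by the comparison with $w$, and the pointwise bound $u\geq t_0\,\varphi_1^{p/(p-1+\gamma)}$ passes to the limit directly.
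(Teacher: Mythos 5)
Your subsolution construction with $w = t_0\varphi_1^{\alpha}$, $\alpha = p/(p-1+\gamma)$, is exactly the one used in the paper, and the a priori estimates, Marcinkiewicz bounds, dominated-convergence argument on the singular term, and Boccardo--Murat gradient-convergence step are all correct and match the paper's route. However, there is a genuine gap in the comparison step.

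You set up a \emph{non-iterative} approximation
\[
-\Delta_p u_n + \frac{a(x)}{(u_n^+ + 1/n)^\gamma} = \mu f_n
\]
and then claim that testing the subtracted equations against $(w-u_n)^+$, together with monotonicity of $s\mapsto (s+1/n)^{-\gamma}$, yields $u_n\geq w$. This does not work, because the singular reaction is \emph{decreasing} in $u$: subtracting and testing with $(w-u_n)^+$ gives
\[
\int \bigl(|\nabla w|^{p-2}\nabla w - |\nabla u_n|^{p-2}\nabla u_n\bigr)\cdot\nabla(w-u_n)^+ \ \le\ \int \bigl(g(u_n)-g(w)\bigr)(w-u_n)^+,
\]
where $g(s)=a(x)(s^++1/n)^{-\gamma}$. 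On $\{w>u_n\}$ the right-hand side is \emph{nonnegative} precisely because $g$ is decreasing, so the inequality is $0\le\text{LHS}\le\text{RHS}$ and no contradiction follows. Equivalently, if you try instead to invoke Lemma \ref{lem comparison} directly by showing $-\Delta_p w \le -\Delta_p u_n$, you would need $(u_n^++1/n)^{-\gamma} \le (w+1/n)^{-\gamma}$, i.e.\ $u_n\ge w$, which is the very statement to be proved. This is precisely why the nonnegative $a$ case is harder than the absorption case $a\le 0$: the zero-order term breaks the monotone structure needed for a direct sub/supersolution comparison.

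The paper circumvents this by an \emph{iterative} scheme: the singular term is evaluated at $u_{n-1}$, not $u_n$, with $u_0 = t_0\|\varphi_1^r\|_{L^\infty(\Omega)}\ge w$. At each step one is solving a genuine Leray--Lions problem with a fixed right-hand side $\mu f_n - a(|u_{n-1}|+1/n)^{-\gamma}$, so Lemma \ref{lem comparison} applies immediately: assuming $u_{n-1}\ge w$ by induction gives $a(|u_{n-1}|+1/n)^{-\gamma} \le a(w+1/n)^{-\gamma}$ and hence $-\Delta_p w \le -\Delta_p u_n$, whence $u_n\ge w$. To repair your proof you would either have to switch to this iterative scheme (in which case Schauder is unnecessary; Leray--Lions suffices), or supply a separate comparison argument adapted to non-monotone singular reactions. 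A secondary point: the paper uses $f_n = T_{n+c_{\Omega\setminus\overline{\Omega_{\overline{\varepsilon}}}}}(f)$ rather than $T_n(f)$ precisely so that the interior lower bound $f_n\ge c_{\Omega\setminus\overline{\Omega_{\overline{\varepsilon}}}}$ holds for \emph{all} $n$, not just for $n$ large; with your choice $f_n=T_n(f)$ the comparison with $w$ would fail for small $n$, which matters because the induction must start at $n=1$.
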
 
As one might expect, the case $p\geq N$ is simpler. In this situation, the result can be stated as follows:
\begin{theorem}\label{theo pgeqN}
Let $p\geq N$,	$0<\gamma<1$, $0\lneq a\in L^{\infty}(\Omega)$ and let $f\in L^q(\Omega)$ be a function satisfying \eqref{e.hypf1}. Then there exists a positive $\mu_0$ such that for any $\mu\ge \mu_0$ the following holds:  
\begin{enumerate}
	\item[i)] if $p=N$ and $q>1$, then there exists a positive finite energy solution $u$ to \eqref{e.problem};
	\item[ii)] if $p>N$ and $q=1$, then there exists a positive finite energy solution $u$ to \eqref{e.problem}.
	\end{enumerate}
	Furthermore $au^{-\gamma}$ belongs to $L^1(\Omega)$,   and there exists $t_0>0$, depending only on $\Omega, a,  \gamma$ and $p$,  such that $$u\geq t_0\varphi_1^{\frac{p}{p-1+\gamma}} \ \text{a.e. in }\ \ \Omega,$$ where $\varphi_1$ is given by \eqref{e.autofunc}. \bk
\end{theorem}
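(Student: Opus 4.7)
The plan is to run the same sub- and supersolution scheme used in Theorem \ref{teo_existence}, with the simplification that, for $p\geq N$, the Sobolev embedding directly gives a $W^{1,p}_0(\Omega)$ bound on the approximating sequence, so there is no need to pass through the Marcinkiewicz estimates of Theorem \ref{teo_existence}(ii). Starting from the regularized problems
\begin{equation*}
\begin{cases}
-\Delta_p u_n + \dfrac{a(x)}{(u_n + 1/n)^\gamma} = \mu T_n(f) & \text{in } \Omega,\\
u_n = 0 & \text{on } \partial \Omega,
\end{cases}
\end{equation*}
I would first produce a positive $u_n \in W^{1,p}_0(\Omega)\cap L^\infty(\Omega)$ by a standard monotone-operator/Schauder argument, exploiting the fact that the regularized reaction is bounded in $u_n$.

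Next, I would establish the uniform lower bound $u_n\geq v:=t_0\,\varphi_1^{p/(p-1+\gamma)}$, the very barrier already used in Theorem \ref{teo_existence}. A direct computation based on \eqref{e.autofunc} gives, with $\alpha:=p/(p-1+\gamma)$,
\begin{equation*}
-\Delta_p v = t_0^{p-1}\,\alpha^{p-1}\,\varphi_1^{\alpha(p-1)-p}\bigl[\lambda_p\varphi_1^p - (\alpha-1)(p-1)|\nabla\varphi_1|^p\bigr],
\end{equation*}
and the algebraic identity $\alpha(p-1)-p=-\gamma\alpha$ ensures that $-\Delta_p v$ and $av^{-\gamma}$ are singular near $\partial\Omega$ at exactly the same rate $\varphi_1^{-\gamma\alpha}$. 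Choosing $t_0$ so that the Hopf-type lower bound on $|\nabla\varphi_1|$ near $\partial\Omega$ makes the negative contribution in $-\Delta_p v$ dominate $av^{-\gamma}$ there, and using the largeness of $\mu$ together with \eqref{e.hypf1} on the remaining compact inner region, one obtains $-\Delta_p v+av^{-\gamma}\leq\mu f$ globally. Lemma \ref{lem comparison} then yields $v\leq u_n$, and \eqref{phi equiv dist } together with \eqref{distint} and $\gamma\alpha<1$ immediately provides the integrable majorant $av^{-\gamma}\in L^1(\Omega)$ that controls the singular term uniformly in $n$.

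For the uniform $W^{1,p}_0$ bound I would test the approximate equation against $u_n$, drop the nonnegative singular contribution, and estimate
\begin{equation*}
\int_\Omega|\nabla u_n|^p \leq \mu\int_\Omega T_n(f)\,u_n \leq \mu\|f\|_{L^q(\Omega)}\|u_n\|_{L^{q'}(\Omega)}.
\end{equation*}
When $p>N$ and $q=1$, Morrey's embedding $W^{1,p}_0(\Omega)\hookrightarrow L^\infty(\Omega)$ bounds $\|u_n\|_{L^\infty}$ by $C\|\nabla u_n\|_{L^p}$, producing $\|\nabla u_n\|_{L^p}\leq C$; when $p=N$ and $q>1$, the embedding $W^{1,N}_0(\Omega)\hookrightarrow L^{q'}(\Omega)$ (valid for any finite $q'$) yields the same conclusion. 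This is precisely where the hypothesis $p\geq N$ enters and where the integrability threshold on $f$ is weaker than in Theorem \ref{teo_existence}.

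Finally, weak compactness gives $u_n\rightharpoonup u$ in $W^{1,p}_0(\Omega)$ with $u_n\to u$ a.e., and the bound $u_n\geq v$ passes to the limit giving $u\geq t_0\varphi_1^{p/(p-1+\gamma)}>0$ in $\Omega$; dominated convergence with majorant $av^{-\gamma}$ yields $a/(u_n+1/n)^\gamma\to au^{-\gamma}$ in $L^1(\Omega)$; and the standard monotonicity trick for the $p$-Laplacian delivers a.e.\ convergence of the gradients, allowing the identification of the limit equation in the sense of Definition \ref{def}. The main obstacle is the same as in Theorem \ref{teo_existence}: calibrating $t_0$ and $\mu_0$ in the barrier construction so that the singular orders match at the boundary while the pointwise inequality survives in the interior. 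Once this is settled, the $p\geq N$ case truly is routine, the Morrey (resp.\ $L^{q'}$) embedding being all that is needed to upgrade the estimates to finite energy.
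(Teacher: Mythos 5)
Your overall strategy is the right one and coincides in spirit with the paper's: build the barrier $v=t_0\varphi_1^{p/(p-1+\gamma)}$ exactly as in Lemma~\ref{lemma_controllobasso}, get a uniform $W^{1,p}_0$ bound by testing with $u_n$ and invoking the embeddings valid for $p\geq N$, and pass to the limit using Boccardo--Murat a.e.\ gradient convergence and dominated convergence for the singular term. The energy estimate and the integrability of $av^{-\gamma}$ (since $\gamma\alpha<1$) are both correct, and the choice of embedding for $p>N$ vs.\ $p=N$ is exactly what the paper does.

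However, there is a real gap in the comparison step, and it is not cosmetic. You regularize with the \emph{self-referential} scheme $-\Delta_p u_n + a/(u_n+1/n)^\gamma = \mu T_n(f)$ and then assert ``Lemma~\ref{lem comparison} then yields $v\leq u_n$.'' That lemma is a comparison principle for the pure $p$-Laplacian only. To reduce your situation to it you would need $-\Delta_p v \leq -\Delta_p u_n$ weakly, i.e.\ $\mu T_n(f) - a v^{-\gamma} \leq \mu T_n(f) - a(u_n+1/n)^{-\gamma}$, which is precisely the inequality you are trying to prove. The direct route — testing the difference of the two weak inequalities against $(v-u_n)^+$ — also fails here, because the zeroth-order term $w\mapsto a(x)/(w+1/n)^\gamma$ is \emph{decreasing} in $w$: the resulting identity has the form ``nonnegative plus nonpositive $\leq 0$,'' from which one cannot conclude $(v-u_n)^+ = 0$. (This is exactly the phenomenon behind the failure of comparison for $-\Delta u-\lambda u$ once $\lambda$ is at or above the first eigenvalue.) The paper avoids this entirely by using the \emph{iterative} scheme \eqref{pbapprox}, in which the singular term contains $u_{n-1}$, not $u_n$. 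The inductive hypothesis $u_{n-1}\geq v$ then turns the singular term into a known quantity dominated by $a(v+1/n)^{-\gamma}$, and the comparison becomes a statement about the $p$-Laplacian alone, where Lemma~\ref{lem comparison} applies. You can repair your argument either by adopting the paper's iteration (the proof of Lemma~\ref{lemma_controllobasso} carries over verbatim for $p\geq N$), or by arguing via the sub/supersolution method with a monotone iteration after adding a large zeroth-order shift $K_n u$ (with $K_n$ controlling the Lipschitz constant of $a(\cdot+1/n)^{-\gamma}$) to restore monotonicity; but as written, the claimed application of Lemma~\ref{lem comparison} is not valid.

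A minor additional point: the paper truncates $f$ at the level $n + c_{\Omega\setminus\overline{\Omega_{\overline\varepsilon}}}$ rather than at $n$, precisely so that $f_n$ retains the uniform positive lower bound \eqref{condfn} on the interior region for \emph{every} $n$, which is needed in the barrier construction; with $T_n(f)$ this lower bound can fail for small $n$. This is easy to fix but worth noting.
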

\bk
\medskip

As a counterpart result of the previous theorem we   show that no finite energy solution exists for a small enough $\mu$. In order to state our result we need a slightly stronger assumption on $a$, i.e.  when $\mu$ is small enough, \bk we request that $a$ is bounded away from zero; on the other hand the result holds true for a generic $a\in L^1(\Omega)$. 

\begin{theorem}\label{t.nonexistence}
	Let $0<c_0 \le a\in L^1(\Omega)$ for some positive constant $c_0$,  and let $0\le f\in L^{\infty}(\Omega)$. Then there exists $\mu_{0*}>0$ such that problem \eqref{e.problem} does not admit a finite energy solution  for any $0<\mu<\mu_{0*}$.
\end{theorem}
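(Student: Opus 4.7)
I would argue by contradiction: assume a finite energy solution $u\in W_0^{1,p}(\Omega)$ to \eqref{e.problem} exists and derive a quantitative lower bound on $\mu$. The plan is first to bound $u$ from above (pointwise) by a power of $\mu$, then test the equation against $\varphi_1$ to bound $\int a\varphi_1/u^\gamma$ from above by a linear power of $\mu$, and finally combine these two facts to force $\mu$ to be larger than a positive constant.

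\textbf{Step 1: energy and $L^\infty$ estimate on $u$.} Since $u\in W_0^{1,p}(\Omega)$, approximation by $C_c^1$ functions allows $u$ itself as a test function in \eqref{e.defweaksol}. Discarding the nonnegative singular term, Hölder and the Poincar\'e inequality (with constant $\lambda_p$ from \eqref{e.rayleigh}) give
$$
\int_\Omega|\nabla u|^p\leq\mu\int_\Omega fu\leq\mu\|f\|_\infty|\Omega|^{1/p'}\|u\|_p\leq \mu\|f\|_\infty|\Omega|^{1/p'}\lambda_p^{-1/p}\|\nabla u\|_p,
$$
from which
$$
\|\nabla u\|_p\leq C\mu^{1/(p-1)}.
$$
On the other hand, since $au^{-\gamma}\geq 0$, $u$ is a distributional subsolution of $-\Delta_p u=\mu f$ with $\mu f\in L^\infty(\Omega)$; Lemma \ref{lem comparison} together with the classical $L^\infty$ estimate for the $p$-Laplacian with bounded datum gives a function $w\in W_0^{1,p}(\Omega)\cap L^\infty(\Omega)$ with $u\le w$ a.e.\ and
$$
0<u(x)\leq w(x)\leq C_\infty\mu^{1/(p-1)}\quad\text{a.e.\ in }\Omega,
$$
so that pointwise
$$
\frac{1}{u^\gamma}\geq c\,\mu^{-\gamma/(p-1)}\quad\text{a.e.\ in }\Omega.
$$

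\textbf{Step 2: testing the equation with $\varphi_1$.} Since $\varphi_1\notin C_c^1(\Omega)$, I would employ a cutoff $\eta_\varepsilon\in C_c^1(\Omega)$ with $0\leq\eta_\varepsilon\leq 1$, $\eta_\varepsilon\equiv 0$ on $\Omega_{\varepsilon/2}$, $\eta_\varepsilon\equiv 1$ on $\Omega\setminus\Omega_\varepsilon$, and $|\nabla\eta_\varepsilon|\leq C\varepsilon^{-1}$, and plug $\varphi=\varphi_1\eta_\varepsilon\in C_c^1(\Omega)$ into \eqref{e.defweaksol}. The correction term
$$
\int_\Omega\varphi_1\,|\nabla u|^{p-2}\nabla u\cdot\nabla\eta_\varepsilon
$$
tends to $0$ as $\varepsilon\to 0$, because $\varphi_1\leq c_2\delta\leq c_2\varepsilon$ on $\mathrm{supp}\,\nabla\eta_\varepsilon\subset\Omega_\varepsilon\setminus\Omega_{\varepsilon/2}$ by \eqref{phi equiv dist }, while $|\nabla u|^{p-1}\in L^1(\Omega)$ by H\"older and the bound on $\|\nabla u\|_p$; the singular term passes to the limit by monotone convergence, and the remaining terms by dominated convergence. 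Thus
$$
\int_\Omega|\nabla u|^{p-2}\nabla u\cdot\nabla\varphi_1+\int_\Omega\frac{a\varphi_1}{u^\gamma}=\mu\int_\Omega f\varphi_1.
$$
Using H\"older and Step 1,
$$
\left|\int_\Omega|\nabla u|^{p-2}\nabla u\cdot\nabla\varphi_1\right|\leq\|\nabla u\|_p^{p-1}\|\nabla\varphi_1\|_p\leq C_1\mu,\qquad \mu\int_\Omega f\varphi_1\leq C_2\mu,
$$
so
$$
\int_\Omega\frac{a\varphi_1}{u^\gamma}\leq C_3\mu.
$$

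\textbf{Step 3: conclusion.} Combining the pointwise bound $u\leq C_\infty\mu^{1/(p-1)}$ with $a\geq c_0$,
$$
\int_\Omega\frac{a\varphi_1}{u^\gamma}\geq\frac{c_0}{(C_\infty\mu^{1/(p-1)})^\gamma}\int_\Omega\varphi_1=c_4\,\mu^{-\gamma/(p-1)}.
$$
Comparing with Step 2 gives $c_4\,\mu^{-\gamma/(p-1)}\leq C_3\mu$, whence $\mu^{1+\gamma/(p-1)}\geq c_4/C_3>0$; taking $\mu_{0*}:=(c_4/C_3)^{(p-1)/(p-1+\gamma)}$ contradicts $\mu<\mu_{0*}$, and the theorem follows.

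\textbf{Expected main obstacle.} The delicate point is the use of $\varphi_1$ as a test function: a priori we only know $au^{-\gamma}\in L^1_{\mathrm{loc}}(\Omega)$, not $a\varphi_1u^{-\gamma}\in L^1(\Omega)$, so the cutoff procedure must be set up in such a way that the singular term is controlled (its finiteness is in fact obtained \emph{a posteriori} from the finite right-hand side and Fatou's lemma), while simultaneously ensuring that the boundary correction produced by $\nabla\eta_\varepsilon$ truly vanishes. The $L^\infty$ bound on $u$ via comparison also deserves care, as it is what allows the singular term to be bounded pointwise from below by a negative power of $\mu$.
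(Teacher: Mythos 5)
Your proposal is correct, but it follows a genuinely different route from the paper's proof, which is considerably shorter and more elementary. The paper simply invokes Lemma \ref{Lemma_reg} to take $v=u$ in \eqref{e.c2}, obtaining
$$
\int_\Omega|\nabla u|^p+\int_\Omega a\,u^{1-\gamma}\leq\mu\int_\Omega fu,
$$
and then, using $a\geq c_0$, $f\in L^\infty$, and the pointwise inequality $s\leq s^{1-\gamma}+s^p$ for $s\geq 0$, reduces directly to $\int_\Omega|\nabla u|^p\leq C\mu\int_\Omega u^p$ for $\mu$ below a first threshold; the Rayleigh quotient \eqref{e.rayleigh} then forces $u\equiv 0$ once $\mu<\lambda_p/C$ as well. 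That argument never needs an $L^\infty$ bound on $u$, never tests against $\varphi_1$, and so avoids your Steps 1 (the $L^\infty$ part) and 2 entirely.

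Your alternative — scaling the $L^\infty$ estimate via $p$-homogeneity of the operator to get $u\leq C_\infty\mu^{1/(p-1)}$, then pairing the equation with $\varphi_1$ to bound $\int a\varphi_1/u^\gamma$ from above by $C_3\mu$, and concluding by comparing with the pointwise lower bound $c_4\mu^{-\gamma/(p-1)}$ — is in the spirit of \cite{DMO}'s original eigenfunction approach and is sound. A few remarks: (a) your Step 1 step "approximation by $C_c^1$ functions allows $u$ itself as a test function" hides exactly the Fatou-type argument that the paper isolates as Lemma \ref{Lemma_reg}; you should invoke it (or replicate its proof), since $au^{1-\gamma}\in L^1$ is not free with $a$ merely in $L^1$. (b) Once Lemma \ref{Lemma_reg} is in hand, your cutoff construction in Step 2 becomes unnecessary — $\varphi_1\in W_0^{1,p}(\Omega)$ is an admissible test function in \eqref{e.c2} directly — though the cutoff argument as written is also correct (the boundary correction vanishes by absolute continuity of $\int|\nabla u|^{p-1}$, and the singular term's finiteness is recovered a posteriori by Fatou, exactly as you say). (c) The $L^\infty$ bound on $u$ via comparison with the solution of $-\Delta_p w=\mu f$ is again essentially the final part of Lemma \ref{Lemma_reg}, but you additionally need the $\mu^{1/(p-1)}$ scaling, which follows from the $(p-1)$-homogeneity of $\Delta_p$; it is worth stating that explicitly. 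In short: the approach works and gives an explicit $\mu_{0*}$ of the form $(c_4/C_3)^{(p-1)/(p-1+\gamma)}$, but it buys nothing over the paper's one-line trick $s\leq s^{1-\gamma}+s^p$, and it pays for the eigenfunction pairing with a quantitative $L^\infty$ estimate that the paper's route does not require.
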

The following regularity result, which is interesting by itself,  will be useful in the sequel.  Firstly, it  shows that if a solution has finite energy and the datum is regular enough then the set of test functions in \eqref{e.defweaksol} can be suitably enlarged. Furthermore,   if the datum $f$ is summable enough, then any solution to \eqref{e.problem} is actually  essentially \bk bounded.

\begin{lemma}\label{Lemma_reg}
	Let $1<p<N\bk, \gamma>0$, let $0\le a\in L^1(\Omega)$, $\mu>0$,  and $0\le f\in L^{\frac{Np}{Np-N+p}}(\Omega)$. Moreover let $u\in W^{1,p}_0(\Omega)$ be a solution to \eqref{e.problem} in sense of Definition \ref{def}. Then $$\frac{a} {u^{\gamma}}v \in L^1(\Omega)\qquad \text{for every $v \in W^{1,p}_0(\Omega)$}.$$ Moreover it holds 
\begin{equation}\label{e.c2}
	\int_{\Omega}|\nabla u|^{p-2}\nabla u\cdot \nabla v+	\int_{\Omega}\frac{a} {u^{\gamma}}v = \mu\int_{\Omega}fv\qquad  \text{ for every }v\in W_{0}^{1,p}(\Omega).
\end{equation}
Finally, if $f\in L^{r}(\Omega)$ for some $r>\frac{N}{p}$, then $u\in L^{\infty}(\Omega)$.
\end{lemma}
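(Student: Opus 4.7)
The plan is to prove the three assertions in order: integrability of $av/u^\gamma$ against arbitrary $v\in W^{1,p}_0(\Omega)$, extension of the weak formulation \eqref{e.c2} to such test functions, and the final $L^\infty$ bound via Stampacchia iteration.

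For the first claim, after splitting $v=v^+-v^-$ it is enough to treat $v\in W^{1,p}_0(\Omega)$, $v\ge 0$. I would approximate $v$ by $\varphi_n\in C_c^1(\Omega)$, $\varphi_n\ge 0$, with $\varphi_n\to v$ in $W^{1,p}_0(\Omega)$ and a.e.\ (e.g.\ mollification of compactly supported truncations). Testing Definition \ref{def} with $\varphi_n$ and isolating the singular term,
$$\int_{\Omega}\frac{a\varphi_n}{u^\gamma}=\mu\int_{\Omega} f\varphi_n -\int_{\Omega}|\nabla u|^{p-2}\nabla u\cdot\nabla\varphi_n.$$
The right-hand side converges to $\mu\int_\Omega fv-\int_\Omega |\nabla u|^{p-2}\nabla u\cdot\nabla v$: the gradient contribution by the strong convergence $\nabla\varphi_n\to\nabla v$ in $L^p(\Omega)$ and H\"older, and the source term by the H\"older pair $f\in L^{(p^*)'}(\Omega)$, $v\in L^{p^*}(\Omega)$ (Sobolev, since $(p^*)'=Np/(Np-N+p)$). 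Fatou's lemma applied to the nonnegative integrand on the left then delivers the key bound
$$(\star)\qquad \int_{\Omega} \frac{av}{u^\gamma}\le \mu\int_{\Omega} fv - \int_{\Omega}|\nabla u|^{p-2}\nabla u\cdot\nabla v<\infty,$$
which gives $av/u^\gamma\in L^1(\Omega)$ for all signed $v\in W^{1,p}_0(\Omega)$ by linearity.

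For \eqref{e.c2} with a general $v\in W^{1,p}_0(\Omega)$, I would apply $(\star)$ to the two nonnegative functions $(\varphi_n-v)^+$ and $(v-\varphi_n)^+$, both in $W^{1,p}_0(\Omega)$, and sum:
$$\int_{\Omega}\frac{a|\varphi_n-v|}{u^\gamma}\le \mu\int_{\Omega} f|\varphi_n-v|+\|\nabla u\|_{L^p}^{p-1}\|\nabla(\varphi_n-v)\|_{L^p},$$
whose right-hand side vanishes as $n\to\infty$ by Sobolev and H\"older. Hence $\int_\Omega a\varphi_n/u^\gamma\to\int_\Omega av/u^\gamma$; the gradient and source contributions converge as in the previous step; and passing to the limit in Definition \ref{def} yields \eqref{e.c2} for every $v\in W^{1,p}_0(\Omega)$. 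For the $L^\infty$ claim, assuming $f\in L^r(\Omega)$ with $r>N/p$, I would test \eqref{e.c2} with $G_k(u):=(u-k)^+\in W^{1,p}_0(\Omega)$; since $aG_k(u)u^{-\gamma}\ge 0$, dropping the singular term leaves
$$\int_{\Omega}|\nabla G_k(u)|^p\le \mu\int_{\{u>k\}} f\,G_k(u),$$
and Sobolev combined with H\"older (exploiting $r>N/p$) produces the standard Stampacchia recurrence $|A_h|\le C(h-k)^{-p^*}|A_k|^\beta$, with $A_k:=\{u>k\}$ and $\beta>1$, whose iteration forces $|\{u>k_0\}|=0$ for $k_0$ large, i.e.\ $u\in L^\infty(\Omega)$.

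The delicate step is clearly the passage of $a\varphi_n/u^\gamma$ to the limit: a naive dominated convergence argument would require a dominating function proportional to $a/u^\gamma$, which is only in $L^1_{\mathrm{loc}}(\Omega)$ a priori. Applying $(\star)$ to $(\varphi_n-v)^\pm$ sidesteps this obstruction and simultaneously accommodates signed and unbounded test functions in a single stroke.
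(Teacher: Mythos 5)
Your proof is correct, and it departs from the paper's argument in two genuine ways. First, to pass to the limit in the singular term, the paper introduces the auxiliary test functions $\psi_n=\min\{v,\varphi_n\}$ (which then need to be mollified, since they are not $C^1$), precisely so that the pointwise bound $0\le\psi_n\le v$ makes dominated convergence applicable once Fatou has delivered $av/u^\gamma\in L^1(\Omega)$. You avoid this construction entirely: by reapplying your inequality $(\star)$ to the nonnegative functions $(\varphi_n-v)^\pm\in W^{1,p}_0(\Omega)$ you obtain the quantitative estimate $\int_\Omega a|\varphi_n-v|/u^\gamma\to 0$, i.e.\ genuine $L^1$-convergence of the singular terms, which is strictly stronger than what dominated convergence provides and dispenses with both the $\min$ and the mollification. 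Second, for the $L^\infty$ bound the paper compares $u$ to the solution $w$ of $-\Delta_p w=\mu f$ via Lemma \ref{lem comparison} and cites boundedness of $w$; you instead drop the (now known to be integrable and nonnegative) singular term after testing with $(u-k)^+$ and run Stampacchia's iteration directly on $u$. Both routes are sound; the paper's comparison argument is shorter because it delegates the iteration to a standard result, while your version is self-contained. One small presentational remark: the nonnegativity $\varphi_n\ge0$ of the $C^1_c$ approximations should be stated as part of the construction (mollification of compactly supported truncations of $v$ does produce this), since $(\star)$ as you derive it is a statement about nonnegative $v$ and you apply it to $(\varphi_n-v)^\pm$, not to $\varphi_n$ itself, so the sign of $\varphi_n$ is not actually used after the Fatou step.
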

\begin{proof}
	Let us prove the result for a nonnegative $v\in W_{0}^{1,p}(\Omega)$; the case of $v$ with general sign will easily follow. 
	
	\medskip
	
	Consider at first a sequence  $\varphi_{n} \in C^{1}_{c}(\Omega)$ such that $\varphi_{n} \to v$ strongly in $W^{1,p}_{0}(\Omega)$ as $n\to\infty$. Now let $\rho_{\eta}$ be a standard mollifier. It holds that $\psi_{n,\eta}=\rho_{\eta}* \min\{v, \varphi_n\} \in C^{1}_{c}(\Omega)$ for $\eta>0$ small enough. Then we test \eqref{e.defweaksol} with $\varphi=\psi_{n,\eta}$, i.e. 
	\begin{equation}\label{rem_ex}
		\int_{\Omega} |\nabla u|^{p-2}\nabla u\cdot \nabla \psi_{n,\eta}+ \int_{\Omega} \frac{a \psi_{n,\eta}}{u^\gamma}= \mu\int_{\Omega} f \psi_{n,\eta}.
	\end{equation}
	Now observe that $\psi_{n,\eta} \to \psi_n=\min\{\varphi_{n},v\}$ as $\eta \to 0$ strongly in $W^{1,p}(\Omega)$, *-weak in $L^{\infty}(\Omega)$ and it has compact support in $\Omega$. Then, since  $|\nabla u|^{p-1}\in L^{\frac{p}{p-1}}(\Omega)$, $au^{-\gamma} \in L^1_{\rm loc}(\Omega)$,  and $f\in L^{\frac{Np}{Np-N+p}}(\Omega)$, we may let  $\eta \to 0$ in \eqref{rem_ex} in order to deduce that   
	\begin{equation}\label{rem_ex2}
		\int_{\Omega} |\nabla u|^{p-2}\nabla u\cdot \nabla \psi_{n}+ \int_{\Omega} \frac{a\psi_{n}}{u^\gamma}= \mu\int_{\Omega} f \psi_{n}.
	\end{equation}
	
	Therefore, since $\psi_{n} \to v$ strongly in $W^{1,p}_0(\Omega)$ as $n\to\infty$, we can take $n\to\infty$ in the first and in the third term of \eqref{rem_ex2} (observe that $\psi_{n} \to v$ also weakly in $L^{\frac{Np}{N-p}}(\Omega)$). Hence, as up to subsequences $\psi_{n} \to v$  a.e. \bk in $\Omega$, an application of the Fatou Lemma gives that
	$$\int_{\Omega} \frac{av}{u^\gamma} \le \liminf_{n \to \infty} \int_{\Omega} \frac{a\psi_n}{u^\gamma} =   -  \int_{\Omega} |\nabla u|^{p-2}\nabla u \cdot \nabla v + \mu\int_{\Omega} f v,$$
	which implies  $avu^{-\gamma} \in L^{1}(\Omega)$. Hence, as $0\leq \psi_n\leq v$,  after an application of the Lebesgue Theorem in the second term of \eqref{rem_ex2} one can deduce that \eqref{e.c2} holds.
	
	\medskip
	
	 We finally show that $u$ is  essentially \bk bounded if $f\in L^{r}(\Omega)$ with $r>\frac{N}{p}$. Indeed, let $w\in W_{0}^{1,p}(\Omega)$ be the solution to $-\Delta_{p}w=\mu f$. The  classical Stampacchia   argument gives that $w$ is  essentially \bk bounded. Furthermore, from \eqref{e.c2} one gets
	$$
	\int_{\Omega}|\nabla u|^{p-2}\nabla u\cdot\nabla v\leq\mu\int_{\Omega}fv=\int_{\Omega}|\nabla w|^{p-2}\nabla w\cdot\nabla v\qquad \text{ for all }v\in W_{0}^{1,p}(\Omega), v\geq0,
	$$
	which allows to apply Lemma \ref{lem comparison} in order to deduce that $u\leq w$  a.e. \bk in $\Omega$. This concludes the proof. 
\end{proof}

\section{Proof of existence and non-existence results}
\label{sec_ex<1}

In order to prove  Theorems \ref{teo_existence} and \ref{theo pgeqN} \bk we perform an iteration procedure.  Before introducing the scheme, it is  useful to make some preliminary comments on the strategy. As we will see,  the approximating solutions are shown to be bounded from below by a suitable power of the first eigenfunction of the $p$-Laplacian in $\Omega$ (defined in \eqref{e.autofunc}), namely $t_0\varphi_1^r$ with $r= \frac{p}{p+\gamma-1}>1$ for some positive constant $t_0$ which will be fixed later; roughly speaking,  we will consider  $t_0\|\varphi_1^r\|_{L^\infty(\Omega)}$ as the first step of the iteration and from it we will construct a suitable sequence of functions $u_n$ that, at the end,  will converge to a solution of problem \eqref{e.problem}. 

\smallskip 
Let us recall that for $\varepsilon >0$ small enough we denote by $\Omega_\varepsilon:=\{x\in \Omega: \delta(x)<\varepsilon\}$. We will frequently consider $\Omega_{\varepsilon}$ with $\varepsilon\le \overline{\varepsilon}$ for some fixed $\overline{\varepsilon}>0$.

\medskip

We now  introduce the iterative scheme; let $u_{0}=t_0\|\varphi_1^r\|_{L^\infty(\Omega)}\text{ in }\Omega$, where $t_0>0$  will be fixed in Lemma \ref{lemma_controllobasso} below,  and,  for   $n>0$, consider  $u_n$ as the (weak) solution of 
\begin{equation}\label{pbapprox}
\begin{cases}
		\displaystyle -\Delta_p u_{n}+ \frac{a(x)}{\left(|u_{n-1}|+\frac{1}{n}\right)^{\gamma}}=\mu f_{n}(x)&\text{ in }\Omega,\\
		u_{n} =  0 &\text{ on }
		\partial\Omega,
\end{cases}
\end{equation}
where $f_n=T_{n+c_{\Omega\setminus\overline{\Omega_{\overline{\varepsilon}}}}}(f)$, $r= \frac{p}{p+\gamma-1}$. Here $c_{\Omega\setminus\overline{\Omega_{\overline{\varepsilon}}}}$ is defined by  \eqref{e.hypf1} (with $\omega=\Omega\setminus\overline{\Omega_{\overline{\varepsilon}}} $); in particular, as far as $f_n$ is defined  one   deduces that 
\begin{equation}\label{condfn}
	f_n(x) \ge c_{\Omega\setminus\overline{\Omega_{\overline{\varepsilon}}} } \ge c_{ \Omega\setminus\overline{\Omega_{\varepsilon}} } \text{ for a.e. } x \in  \Omega\setminus\overline{\Omega_{\overline{\varepsilon}}} ,\qquad  \forall \varepsilon<\overline{\varepsilon}, 
\end{equation} 
since  $c_{\Omega\setminus\overline{\Omega_{\overline{\varepsilon}}}}$, defined again in  \eqref{e.hypf1} (with $\omega=\Omega\setminus\overline{\Omega_{\overline{\varepsilon}}} $), is non-increasing as $\varepsilon$ decreases. 

The existence of a function $u_{n} \in W^{1,p}_0(\Omega)\cap L^\infty(\Omega)$ solution to \eqref{pbapprox} follows by standard Leray-Lions Theorem (\cite{ll}). 

\medskip

We start proving a comparison lemma, which provides a barrier from below for $u_n$. We explicitly stress that the constant $t_0$ appearing in the next result does not depend on $n$ and this will be crucial in the proof of Theorem \ref{teo_existence} in order to  pass to the limit in the singular term. 

\begin{lemma}\label{lemma_controllobasso}
	Let $p>1$, $0<\gamma<1$, $0\lneq a\in L^{\infty}(\Omega)$, and let $f\in L^1(\Omega)$  satisfy \bk \eqref{e.hypf1}. Let $u_{n}$ be a solution to \eqref{pbapprox}. Then there exist positive numbers $\mu_0$ and $t_0$ such that
\begin{equation}\label{comparison}
	u_{n}\geq  t_0 \varphi_1^{\frac{p}{p+\gamma-1}} \quad\text{ for all } \mu\geq\mu_0,\quad n\in\mathbb{N},
\end{equation}
	where $\mu_0$ and $t_0$ depend only on $\Omega, a, f, \gamma$ and $p$.
\end{lemma}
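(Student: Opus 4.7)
The plan is to construct an explicit subsolution of the form $w := t_0\,\varphi_1^{r}$ with exponent $r = \frac{p}{p+\gamma-1}>1$, and then to propagate the bound $u_n\geq w$ along the iteration \eqref{pbapprox} by induction on $n$ using the comparison principle of Lemma \ref{lem comparison}. The choice of $r$ is dictated by the requirement that the negative contribution produced by the $-\Delta_p$ of a power of $\varphi_1$ matches, in powers of $\varphi_1$, the singular term $a/w^\gamma$.

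First I would compute $-\Delta_p w$. Since $\nabla w = t_0 r\,\varphi_1^{r-1}\nabla\varphi_1$, a direct calculation using \eqref{e.autofunc} gives, pointwise in $\Omega$,
\begin{equation*}
-\Delta_p w \;=\; \lambda_p (t_0 r)^{p-1}\varphi_1^{(r-1)(p-1)+p-1}\;-\;(t_0 r)^{p-1}(r-1)(p-1)\varphi_1^{(r-1)(p-1)-1}|\nabla\varphi_1|^{p}.
\end{equation*}
The choice of $r$ yields $(r-1)(p-1)-1=-\gamma r$ and $(r-1)(p-1)+p-1=r(p-1)$, so that
\begin{equation*}
-\Delta_p w + \frac{a}{w^\gamma} \;=\; \lambda_p(t_0 r)^{p-1}\varphi_1^{r(p-1)} \;+\; \varphi_1^{-\gamma r}\!\left[\frac{a}{t_0^{\gamma}}-(t_0 r)^{p-1}(r-1)(p-1)|\nabla\varphi_1|^{p}\right].
\end{equation*}

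Next I would exploit Hopf's lemma to fix $\overline{\varepsilon}>0$ and a constant $c_1>0$ with $|\nabla\varphi_1|\geq c_1$ on $\Omega_{\overline{\varepsilon}}$. Since $a\in L^\infty(\Omega)$, the condition $t_0^{\,p+\gamma-1}\geq \frac{2\|a\|_\infty}{r^{p-1}(r-1)(p-1)c_1^{p}}$ forces the bracket to be $\leq -\tfrac12(t_0 r)^{p-1}(r-1)(p-1)c_1^{p}$ on $\Omega_{\overline{\varepsilon}}$. After possibly shrinking $\overline{\varepsilon}$ so that $\varphi_1^{\gamma r}$ is small enough on $\Omega_{\overline{\varepsilon}}$, the negative term $\varphi_1^{-\gamma r}[\,\cdot\,]$ dominates $\lambda_p(t_0 r)^{p-1}\varphi_1^{r(p-1)}$; hence $-\Delta_p w + a/w^\gamma \leq 0$ on $\Omega_{\overline{\varepsilon}}$. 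On the complementary set $\Omega\setminus\overline{\Omega_{\overline{\varepsilon}}}$ one has $\varphi_1\geq c>0$, so the quantity $-\Delta_p w + a/w^\gamma$ is bounded by a constant $M$ depending only on $\Omega, a, \gamma, p$. Invoking \eqref{condfn} on this set ($f_n \geq c_{\Omega\setminus\overline{\Omega_{\overline{\varepsilon}}}}$ uniformly in $n$), the choice $\mu_0 := M/c_{\Omega\setminus\overline{\Omega_{\overline{\varepsilon}}}}$ yields $-\Delta_p w + a/w^\gamma \leq \mu f_n$ throughout $\Omega$ for every $\mu\geq \mu_0$ and every $n\in\mathbb{N}$.

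Finally I would run the induction. For $n=0$ the inequality $u_0 = t_0\|\varphi_1^r\|_\infty \geq t_0\varphi_1^r$ is trivial. Assuming $u_{n-1}\geq w$, the monotonicity of $s\mapsto s^{-\gamma}$ gives $a/(u_{n-1}+1/n)^\gamma \leq a/w^\gamma$, so by the previous step $w$ is a subsolution of \eqref{pbapprox} in the weak sense; since $w,u_n\in W^{1,p}_0(\Omega)$, Lemma \ref{lem comparison} delivers $w\leq u_n$ and closes the induction. The main obstacle will be to arrange that the bracketed quantity is not merely negative but sufficiently negative near $\partial\Omega$ for the weight $\varphi_1^{-\gamma r}$ to absorb the positive term $\lambda_p(t_0 r)^{p-1}\varphi_1^{r(p-1)}$, while keeping both $t_0$ and the resulting threshold $\mu_0$ independent of $n$ and depending only on $\Omega, a, f, \gamma, p$; this is exactly what Hopf's lemma together with the uniform bound \eqref{condfn} on $f_n$ allows us to achieve.
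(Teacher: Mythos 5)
Your proposal is correct and follows essentially the same route as the paper: the same subsolution $t_0\varphi_1^{r}$ with $r=\tfrac{p}{p+\gamma-1}$, the same computation of $-\Delta_p(t_0\varphi_1^r)$, the same use of Hopf's lemma to make the $|\nabla\varphi_1|^p$ term dominate near $\partial\Omega$, the same interior/boundary splitting with $\Omega_{\overline\varepsilon}$, the same choice of $\mu_0$ from the interior estimate via \eqref{condfn}, and the same induction closed by Lemma \ref{lem comparison}. The only (cosmetic) difference is in bookkeeping: you fix $t_0$ directly from a lower bound on $\Omega_{\overline\varepsilon}$ forcing the bracket sufficiently negative there, whereas the paper fixes $t_0$ from the interior requirement \eqref{tzero} and then exploits the blow-up $t_0(\overline\varepsilon)\gtrsim\overline\varepsilon^{-r}$ (their \eqref{esplode}) to derive negativity near the boundary; both orderings are consistent and yield constants depending only on $\Omega,a,f,\gamma,p$.
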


\begin{proof}
First we show that $v=t  \varphi_{1}^{r}$, where $r= \frac{p}{p+\gamma-1}>1$ and $t>0$ is large enough, satisfies  
\begin{equation}\label{subsol0}
	-\Delta_p v+\frac{a}{(v+\frac{1}{n})^{\gamma}}\leq \mu f_n \ \ \text{ in }\Omega,
\end{equation} 
for any $n\in\mathbb{N}$,  and for any $\mu\geq \mu_0$ for some suitably large $\mu_0$.
To this aim, a simple computation takes to
$$
-\Delta_p v=-(t r)^{p-1}(r-1)(p-1)\varphi_{1}^{(r-1)(p-1)-1}|\nabla\varphi_{1}|^{p}-(t r)^{p-1}\varphi_{1}^{(r-1)(p-1)}\Delta_{p}\varphi_{1},
$$
which implies {that} \bk
$$
-\Delta_p v=-\frac{t^{p+\gamma-1}}{v^{\gamma}}\left(r^{p-1}(r-1)(p-1)\varphi_{1}^{(r-1)(p-1)-1+\gamma r}|\nabla\varphi_{1}|^{p}+ r^{p-1}\varphi_{1}^{(r-1)(p-1)+\gamma r}\Delta_{p}\varphi_{1}\right).
$$
{ Using \eqref{e.autofunc} and the fact that $r=\frac{p}{p+\gamma-1}$, one gets} \bk
$$
-\Delta_p v=\frac{t^{p+\gamma-1}}{v^{\gamma}}\left(r^{p-1}(1-r)(p-1)|\nabla\varphi_{1}|^{p}+ r^{p-1}\lambda_{p}\varphi_{1}^{p}\right).
$$

Hence we have shown that 
\begin{equation}\label{sub1}
-\Delta_p v=\frac{t^{p+\gamma -1}}{v^{\gamma}}\left(-C_{p,\gamma}|\nabla\varphi_{1}|^{p}+ D_{p,\gamma,\Omega}\varphi_{1}^{p}\right)=-t^{p-1} C_{p,\gamma}|\nabla\varphi_{1}|^{p}\varphi_{1}^{-r\gamma}+t^{p-1}D_{p,\gamma,\Omega}\varphi_{1}^{p-r\gamma},
\end{equation}
where
$$
C_{p,\gamma}=r^{p-1}(r-1)(p-1) \ \text{ and } \ D_{p,\gamma,\Omega}=r^{p-1}\lambda_{p}.
$$
{Then, it  follows from \eqref{sub1} that}\bk
\begin{equation}\label{e.subsolformula}
-\Delta_p v + \frac{a(x)}{v^\gamma} = -t^{p-1} C_{p,\gamma}|\nabla\varphi_{1}|^{p}\varphi_{1}^{-r\gamma}+t^{p-1}D_{p,\gamma,\Omega}\varphi_{1}^{p-r\gamma}+a(x)t^{-\gamma}\varphi_{1}^{-r\gamma}.
\end{equation}
As we aim to prove the validity of \eqref{subsol0}, we need to show that  the right-hand of \eqref{e.subsolformula} does not exceed $\mu f_{n}$ for $\mu$ large enough. Indeed this will be sufficient by simply observing that $$a(x)\left(v+{\frac{1}{n}}\right)^{-\gamma} \le a(x)v^{-\gamma}\qquad \text{in}\quad\Omega.$$

We proceed by estimating the right-hand of \eqref{e.subsolformula} splitting the calculation both  near the boundary of $\Omega$ (i.e. in $\Omega_\varepsilon$) and in  the interior $\Omega$ (i.e. in $\Omega\backslash\overline{\Omega_\varepsilon}$). 

\medskip 
 We first estimate \eqref{e.subsolformula} into $ \Omega\setminus\overline{\Omega_{\overline{\varepsilon}}}$ ($\overline{\varepsilon}$ to be fixed later). 
We get rid of the first negative term on the right-hand of \eqref{e.subsolformula} and we ask for 
$$a(x)t^{-\gamma}\varphi_{1}^{-r\gamma} \le t^{p-1}D_{p,\gamma,\Omega}\varphi_{1}^{p-r\gamma},$$
for any $x\in\Omega\setminus\overline{\Omega_{\overline{\varepsilon}}}$ which means requiring
\begin{equation}\label{tzero}
	t\ge t_0:= \left(\frac{\|a\|_{L^\infty(\Omega)}}{\displaystyle D_{p,\gamma,\Omega}\min_{\Omega\setminus\overline{\Omega_{\overline{\varepsilon}}} }(\varphi_{1}^{p})}\right)^{\frac{1}{p+\gamma-1}}.
\end{equation}
Therefore we have shown that 
\begin{equation}\label{subdentro}
	-\Delta_p v +\frac{a(x)}{v^{\gamma}} \leq 2t^{p-1}D_{p,\gamma, \Omega}\varphi_{1}^{p-r\gamma} \text{ in }\Omega\setminus\overline{\Omega_{\overline{\varepsilon}}} \ \text{for} \ 	t\ge t_0.
\end{equation}
{ Now let $t=t_0$ in \eqref{subdentro}. We claim that there exists $\mu_0>0$ (depending on $\overline{\varepsilon}$) such that
\begin{equation}\label{subdentro2}
	2t_0^{p-1}D_{p,\gamma, \Omega}\varphi_{1}^{p-r\gamma} \le c_{ \Omega\setminus\overline{\Omega_{\overline{\varepsilon}}}}\mu \stackrel{\eqref{condfn}}{\le} \mu f_n(x)  \text{ in } \ \  \Omega\setminus\overline{\Omega_{\overline{\varepsilon}}}\text{ for all }\mu\geq\mu_0, n\in\mathbb{N}.
\end{equation}
Indeed,} \bk inequality \eqref{subdentro2} amounts  to require that 
\begin{equation*}\label{mu0}
	\displaystyle \mu\ge \mu_0:=\frac{2t_0^{p-1}D_{p,\gamma, \Omega}\|\varphi_{1}\|_{L^\infty(\Omega)}^{p-r\gamma}}{c_{ \Omega\setminus\overline{\Omega_{\overline{\varepsilon}}}}}.
\end{equation*}
We have thus shown that
\begin{equation}\label{equ v fn}
	-\Delta_p v +\frac{a(x)}{v^{\gamma}} \leq \mu f_{n}(x) \ \ \text{ in } \Omega\setminus\overline{\Omega_{\overline{\varepsilon}}} \ \ \text{for} \ \ 	\mu\ge \mu_0\text{ and }t=t_0.
\end{equation}
We then fix $t=t_0$ (i.e. from now on $v=t_0  \varphi_{1}^{r}$) and we  estimate \eqref{e.subsolformula} in $\Omega_{\varepsilon}$ for $0<\varepsilon\leq \overline{\varepsilon}$. Observe that, by \eqref{tzero} and \eqref{phi equiv dist },  $t_0$ depends on $
\varepsilon$ and 
\begin{equation}\label{esplode}
	  t_0(\varepsilon)\geq \frac{c_{3}}{\varepsilon^r} \ \ \quad\text{ for all }\quad 0<\varepsilon\leq \overline{\varepsilon}, 
\end{equation}
for some constant $c_3$ which do not depend on $\varepsilon$.  Now, recall that the Hopf Lemma (see \eqref{phi equiv dist }) guarantees the existence of a constant  $\overline{c}>0$ such that $|\nabla \varphi_{1}|^{p}\geq \overline{c}$ in $\Omega_\varepsilon$ for any $0<\varepsilon\leq \overline{\varepsilon}$.  

Hence, from \eqref{e.subsolformula},  one has 
$$
-\Delta_p v+\frac{a(x)}{v^{\gamma}}\leq-t_{0}^{p-1} \overline{c}C_{p,\gamma}\varphi_{1}^{-r\gamma}+t_{0}^{p-1}D_{p,\gamma,\Omega}\varphi_{1}^{p-r\gamma}+a(x)t_{0}^{-\gamma}\varphi_{1}^{-r\gamma}\text{ in } \Omega_\varepsilon,
$$
which is
\begin{equation}\label{sub2}
	-\Delta_p v+\frac{a(x)}{v^{\gamma}} \leq\left(a(x)t_{0}^{-\gamma}-t_{0}^{p-1} \overline{c}C_{p,\gamma}\right)\varphi_{1}^{-r\gamma}+t_{0}^{p-1}D_{p,\gamma,\Omega}\varphi_{1}^{p-r\gamma}\text{ in }\Omega_\varepsilon.
\end{equation}
Now, assuming $\varepsilon$ small enough, one can deduce
\begin{equation}\label{e.claim1}
	t_{0}^{p-1}D_{p,\gamma,\Omega}\varphi_{1}^{p-r\gamma}\leq\frac{t_{0}^{p-1} \overline{c}C_{p,\gamma}\varphi_{1}^{-r\gamma}}{2}\text{ in }\Omega_\varepsilon.
\end{equation}
Indeed, \eqref{e.claim1} is equivalent to require 
$$
\varphi_{1}^{p}\leq\frac{\overline{c}C_{p,\gamma}}{2 D_{p,\gamma,\Omega}}\text{ in }\Omega_\varepsilon,
$$
which clearly holds if $\varepsilon$ is fixed small enough.

Then, gathering \eqref{e.claim1} into \eqref{sub2}, one gets
$$
-\Delta_p v+\frac{a(x)}{v^{\gamma}} \leq\left(a(x)t_{0}^{-\gamma}-\frac{t_{0}^{p-1} \overline{c} C_{p,\gamma}}{2}\right)\varphi_{1}^{-r\gamma}\text{ in }\Omega_\varepsilon.
$$
Now, using \eqref{esplode}, we get 
$$
-\Delta_p v+\frac{a(x)}{v^{\gamma}} \leq \left(c_{3}^{-\gamma}a(x)\varepsilon^{r\gamma}-\frac{c_{3}^{p-1} \overline{c} C_{p,\gamma}}{2\varepsilon^{r(p-1)}}\right)\varphi_{1}^{-r\gamma}\text{ in }\Omega_\varepsilon.
$$
Therefore, the right-hand of the previous inequality is nonpositive for small enough $\overline{\varepsilon}$  that we fix here, and thus
$$
	-\Delta_p v+\frac{a(x)}{v^{\gamma}}\leq 0\ \ \text{ in }\Omega_{\overline{\varepsilon}}.
$$
Recalling that \eqref{equ v fn} holds in $ \Omega\setminus\overline{\Omega_{\overline{\varepsilon}}}$, we have then proven that 
\begin{equation}\label{subsol}
	-\Delta_p v+\frac{a(x)}{v^{\gamma}}\leq \mu f_n(x) \text{ in }\Omega\text{ for all }\mu\geq\mu_0, 
\end{equation}
for any $n\in\mathbb{N}$. This gives \eqref{subsol0}.

\medskip

 Now we are \bk in position to conclude the proof showing that \eqref{comparison} holds, i.e.  that $$u_{n} \ge v=t_0\varphi_1^{r}\qquad  \text{a.e. in }\,\Omega\qquad \text{for any}\quad n\in\mathbb{N}.$$

We proceed  by induction. For $n=1$,  as \eqref{subsol} is in force, one has 
\begin{equation}\label{subsol2}
	-\Delta_p v+\frac{a(x)}{(v+1)^{\gamma}} \le -\Delta_p v+\frac{a(x)}{v^{\gamma}}\leq \mu f_1(x) = -\Delta_p u_{1}+ \frac{a(x)}{\left(t_0\|\varphi_1^r\|_{L^\infty(\Omega)}+1\right)^{\gamma}} \   \text{ in }\Omega, 
\end{equation}
as, we recall, $u_{0}=t_0\|\varphi_1^r\|_{L^\infty(\Omega)}$. Then, since by definition $u_0 \ge v$  a.e. \bk in $\Omega$, \eqref{subsol2} implies that 
\begin{equation*}\label{subsol3}
	-\Delta_p v \le  -\Delta_p u_{1} \   \text{ in }\Omega, 
\end{equation*}
which, thanks to Lemma \ref{lem comparison}, gives that $u_{1} \ge v$    a.e. \bk in $\Omega$. Now let us assume that $u_{n} \ge v$  a.e. in $\Omega$ \bk and let us prove that $u_{n+1} \ge v$  a.e. \bk in $\Omega$. Once again let us  note that, reasoning as  in \bk \eqref{subsol2}, it holds
\begin{equation*}\label{subsol4}
	\displaystyle -\Delta_p v+\frac{a(x)}{\left(v+\frac{1}{n+1}\right)^{\gamma}} \le \mu f_{n+1}(x) = -\Delta_p u_{n+1}+ \frac{a(x)}{\left(u_{n}+\frac{1}{n+1}\right)^{\gamma}}  \stackrel{u_n \geq v}{\leq} -\Delta_p u_{n+1}+ \frac{a(x)}{\left(v+\frac{1}{n+1}\right)^{\gamma}} \   \text{ in }\Omega, 
\end{equation*}
which implies, using again Lemma \ref{lem comparison},  that $u_{n+1} \ge v$  a.e. \bk in $\Omega$. This shows that $u_{n} \ge v$  a.e. \bk in $\Omega$ for any $n\in\mathbb{N}$.
\end{proof}

\subsection{A priori estimates and convergence results}
Let us collect in the next result  some a priori estimates    for the solutions $u_{n}$  of  \eqref{pbapprox}.

\begin{lemma}\label{lem_priori}
	Let $1<p<N$, $0<\gamma<1$, $0\lneq a\in L^{\infty}(\Omega)$, $\mu >0$,  and let $0\le f\in L^q(\Omega)$, $q\geq 1$. Let $u_{n}$ be a nonnegative solution to \eqref{pbapprox}. Then it holds
	\begin{equation}\label{stimatken}
		\int_{\Omega}|\nabla T_{k}(u_{n})|^{p}\leq\mu k\|f\|_{L^{1}(\Omega)} \ \forall k >0.
	\end{equation}
	Moreover,  it holds:
	\begin{enumerate}
		\item[\textit{i)}] if $q=\frac{Np}{Np-N+p}$ then $u_{n}$ is uniformly bounded in $W^{1,p}_0(\Omega)$ with respect to $n$;
		\item[\textit{ii)}]  if $q=1$ then $u_{n}$ is uniformly bounded in $ M^{\frac{N(p-1)}{N-p}}(\Omega)$ and $|\nabla u_{n}|$ is uniformly bounded in $M^{\frac{N(p-1)}{N-1}}(\Omega)$ with respect to $n$.
	\end{enumerate}
\end{lemma}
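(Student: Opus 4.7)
All three estimates follow from the classical test-function strategy for $p$-Laplacian equations, with one structural simplification: the singular term $a(x)/(u_{n-1}+\tfrac{1}{n})^\gamma$ is nonnegative, and we shall only test against nonnegative functions, so this term can be harmlessly dropped from the left-hand side throughout. This reduces the analysis to that of a $p$-Laplacian equation with nonnegative right-hand side bounded pointwise by $\mu f$, and all constants will be uniform in $n$ since $f_n\le f$. To obtain \eqref{stimatken} I would test \eqref{pbapprox} with $T_k(u_n)\in W^{1,p}_0(\Omega)\cap L^\infty(\Omega)$; dropping the singular contribution and bounding $T_k(u_n)\leq k$ on the right yields the inequality at once.

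For part \textit{i)}, the exponent $q=\tfrac{Np}{Np-N+p}$ is exactly the conjugate of the Sobolev exponent $p^{*}=\tfrac{Np}{N-p}$. I would test with $u_n$ itself (which lies in $W^{1,p}_0\cap L^\infty$), apply H\"older's inequality together with the Sobolev embedding $W^{1,p}_0(\Omega)\hookrightarrow L^{p^{*}}(\Omega)$, and deduce
$$\int_\Omega|\nabla u_n|^p \leq \mu\int_\Omega f\,u_n \leq C\mu\|f\|_{L^q(\Omega)}\|\nabla u_n\|_{L^p(\Omega)},$$
which upon dividing gives the uniform $W^{1,p}_0$ bound $\|\nabla u_n\|_{L^p(\Omega)}^{p-1}\leq C\mu\|f\|_{L^q(\Omega)}$.

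For part \textit{ii)}, concerning $L^1$ data, I would follow the classical Boccardo--Gallou\"et scheme. From \eqref{stimatken} and Sobolev one gets $\|T_k(u_n)\|_{L^{p^{*}}}^p\leq Ck$ uniformly in $n$; Chebyshev's inequality on $\{u_n>k\}$ then yields $|\{u_n>k\}|\leq C k^{-N(p-1)/(N-p)}$, i.e.\ the Marcinkiewicz estimate for $u_n$. For the gradient I would use the standard splitting $\{|\nabla u_n|>h\}\subset\{|\nabla u_n|>h,\ u_n\leq k\}\cup\{u_n>k\}$, bound the first piece by $h^{-p}\int_\Omega|\nabla T_k(u_n)|^p\leq Ck/h^p$ via Chebyshev and \eqref{stimatken}, and optimize in $k$; balancing $Ck/h^p$ with $Ck^{-N(p-1)/(N-p)}$ (which gives $k\sim h^{p(N-p)/[(N-1)(p-1)\cdot?]}$), together with a short exponent calculation, produces the claim with exponent $N(p-1)/(N-1)$.

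The arguments are entirely standard and the constants depend only on $\mu$, $p$, $N$, $\Omega$ and $\|f\|_{L^q(\Omega)}$, never on $n$ nor on the singular datum, which is manifest from the reduction above. There is consequently no genuine obstacle in proving this lemma; the real subtlety of the paper lies in Lemma \ref{lemma_controllobasso} (the uniform lower barrier $u_n\geq t_0\varphi_1^{r}$, which is what will allow the singular term to be controlled in the limit), not in this a priori estimate.
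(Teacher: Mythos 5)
Your proposal is correct and follows essentially the same route as the paper: test with $T_k(u_n)$ (dropping the nonnegative singular term) for \eqref{stimatken}, test with $u_n$ plus H\"older--Sobolev for part \textit{i)}, and the Boccardo--Gallou\"et level-set argument for part \textit{ii)}. The only cosmetic difference is that the paper delegates \textit{ii)} to Lemmas~4.1--4.2 of \cite{B1995} rather than unfolding the Chebyshev splitting and the optimization in $k$ as you do (for the record, the balance gives $k\sim h^{(N-p)/(N-1)}$, which indeed yields the exponent $N(p-1)/(N-1)$).
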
 
\begin{proof}
The proof is standard once one observes that   $u_n$ is a  subsolution of 
$$
\begin{cases}
		\displaystyle -\Delta_p w_n =\mu f_{n}(x)&\text{ in }\Omega,\\
		w_{n} =  0 &\text{ on }
		\partial\Omega;
\end{cases}
$$
we will briefly sketch it for the sake of completeness. 

\smallskip
Let us take $T_k(u_{n})$ as a test function in the weak formulation of \eqref{pbapprox} in order to deduce that 
	\begin{equation}\label{stimatk}
		\int_{\Omega}|\nabla T_{k}(u_{n})|^{p}\leq\mu k\|f\|_{L^{1}(\Omega)},
	\end{equation}
	where we have got rid of the nonnegative second term on the left-hand of \eqref{pbapprox}. This shows \eqref{stimatken}.	
	\medskip
	
\textit{Proof of i).}  Let us take $u_{n}$ as a test function in the weak formulation of \eqref{pbapprox}, yielding to
 \begin{equation}\label{stima1}
 	\begin{aligned}
 	\int_\Omega |\nabla u_{n}|^p \le \mu\int_\Omega f_n u_{n} &\le \mu \left(\int_\Omega f_n^{q}\right)^{\frac{1}{q}} \left(\int_\Omega u^{\frac{Np}{N-p}}_{n}\right)^{\frac{N-p}{Np}} 
 	\\
 	&\le \mu \mathcal{S}_{p}^{-1} \left(\int_\Omega f_n^{q}\right)^{\frac{1}{q}} \left(\int_\Omega |\nabla u_{n}|^p\right)^{\frac{1}{p}},
 	\end{aligned} 
 \end{equation}
 where we have got rid of the nonnegative second term on the left-hand of \eqref{pbapprox}. We also have applied H\"older's and Sobolev's inequalities and $\mathcal{S}_p$, from here on, is the best constant in the Sobolev inequality for functions in $W^{1,p}_0(\Omega)$. Hence, \eqref{stima1}  gives that $u_{n}$ is bounded in $W^{1,p}_0(\Omega)$ in $n$ as $f_n$ is bounded in $L^q(\Omega)$   (recall that $u_{n}$ has zero Sobolev trace).

\medskip

\textit{Proof of ii).} Then the result standardly follows from \eqref{stimatk} by applying Lemmas $4.1$ and $4.2$ of \cite{B1995}. This concludes the proof.
\end{proof}

Next lemma concerns the identification of the limit of  $u_{n}$ as $n$ tends to $\infty$ in the case $q=1$. In fact, if $q=\frac{Np}{Np-N+p}$, by Lemma \ref{lem_priori} one easily gets the existence of $u\in W^{1,p}_{0}(\Omega)$ such that, up to subsequences $u_{n}\to u$ a.e.  in $\Omega$ and weakly in $W^{1,p}_{0}(\Omega)$. 

\begin{corollary}\label{cor_qo}
	Let $1<p<N$, $0<\gamma<1$, $0\lneq a\in L^{\infty}(\Omega)$, $\mu >0$,  and let $f\in L^1(\Omega)$ satisfy \eqref{e.hypf1}. Let $u_{n}$ be a positive  solution to \eqref{pbapprox}. Then $u_{n}$ converges, up to subsequences,  a.e. \bk in $\Omega$ to a positive  function $u\in M^{\frac{N(p-1)}{N-p}}(\Omega)$ as $n\to\infty$.   
\end{corollary}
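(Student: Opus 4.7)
The plan is to combine the two estimates from Lemma \ref{lem_priori}(ii) --- the Marcinkiewicz bound on $u_n$ and the truncation bound \eqref{stimatken} --- to extract an a.e.\ convergent subsequence, and then to use the uniform pointwise lower bound from Lemma \ref{lemma_controllobasso} to ensure that the limit is strictly positive.

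First I would invoke \eqref{stimatken}, which gives that for every $k>0$ the truncations $T_k(u_n)$ are uniformly bounded in $W^{1,p}_0(\Omega)$. Rellich-Kondrachov together with a standard diagonal extraction then yields a subsequence (still labeled $u_n$) and, for each $k\in\mathbb{N}$, a function $v_k\in W^{1,p}_0(\Omega)$ such that $T_k(u_n)\to v_k$ a.e.\ in $\Omega$ and strongly in $L^p(\Omega)$ as $n\to\infty$. This handles convergence at any finite height but not the tails.

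Next I would show that $(u_n)$ is Cauchy in measure. The Marcinkiewicz bound supplies a constant $C>0$, independent of $n$, with $|\{u_n>k\}|\le C k^{-\frac{N(p-1)}{N-p}}$ for every $k>0$. For any $\eta,\varepsilon>0$ and any $k>0$ one has
$$|\{|u_n-u_m|>\eta\}| \leq |\{|T_k(u_n)-T_k(u_m)|>\eta\}| + |\{u_n>k\}| + |\{u_m>k\}|.$$
Choosing first $k$ so large that the last two terms drop below $\varepsilon/3$ uniformly in $n,m$, and then $n,m$ large enough so that the first term is below $\varepsilon/3$ (possible since $T_k(u_n)$ converges, hence is Cauchy in measure), the left-hand side becomes smaller than $\varepsilon$. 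Hence $(u_n)$ is Cauchy in measure, and a further extraction gives $u_n\to u$ a.e.\ in $\Omega$ for some measurable $u$. The Marcinkiewicz estimate transfers to $u$ by Fatou, so $u\in M^{\frac{N(p-1)}{N-p}}(\Omega)$.

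Finally, the positivity comes for free: Lemma \ref{lemma_controllobasso} guarantees $u_n\ge t_0\varphi_1^{p/(p+\gamma-1)}$ a.e.\ in $\Omega$ for every $n$, with $t_0>0$ independent of $n$, and passing to the a.e.\ limit yields $u\ge t_0\varphi_1^{p/(p+\gamma-1)}>0$ a.e.\ in $\Omega$. I do not anticipate any real obstacle: the only non-routine step is the Cauchy-in-measure argument, which is the standard device for upgrading Rellich compactness of truncations plus a uniform tail estimate into a.e.\ convergence of the full sequence, and which is needed here precisely because, in the case $q=1$, one does not have a uniform Sobolev bound on $u_n$ itself.
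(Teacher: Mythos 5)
Your proof is correct and follows essentially the same route as the paper: uniform Marcinkiewicz tail bound plus Rellich--Kondrachov compactness of the truncations, combined via the standard three-set decomposition to obtain Cauchy-in-measure, then Fatou for the Marcinkiewicz regularity of the limit and Lemma \ref{lemma_controllobasso} for strict positivity. The preliminary diagonal extraction you perform is harmless but slightly redundant, since the Cauchy-in-measure argument already delivers the a.e.\ convergent subsequence without it.
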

\begin{proof}
	As  this proof is quite classical nowadays (after \cite{B1995}), we only present a brief sketch. 
	
	An application of the Sobolev inequality in the left-hand of \eqref{stimatk} allows  us to deduce that \bk
	\begin{equation}\label{levelset0}
	\mathcal{S}_p^{-p}\left(\int_{\Omega}|T_{k}(u_{n})|^{p^*}\right)^{\frac{p}{p^*}}\leq\mu k\|f\|_{L^{1}(\Omega)},\text{ for all }n\in\mathbb{N}\text{ and }k >0.
	\end{equation}
	By denoting $A_{n,k}=\{u_{n}\geq k\}$, the previous  inequality \bk implies  that \bk 
	\begin{equation}\label{level set est v2}
		|A_{n,k}|\leq\left(\frac{\mathcal{S}_p^{p}\mu\|f\|_{L^{1}(\Omega)}}{k^{p-1}}\right)^{\frac{N}{N-p}}.
	\end{equation}
	Now let us show that $u_{n}$ is a Cauchy sequence in measure. Indeed observe that for all $\eta,k>0$ and $l,n\in\mathbb{N}$, we have
	\begin{equation*}\label{equ three v2}
		\{|u_{n}-u_{l}|\geq\eta\}\subset A_{n,k}\cup A_{l,k}\cup\{|T_{k}(u_{n})-T_{k}(u_{l})|>\eta\}.
	\end{equation*}
	 From \bk\eqref{level set est v2}, we know that the sizes of $A_{n,k}$ and $A_{l,k}$ tend to zero as $k\to\infty$. To estimate the size of the third set, we use the fact that, up to subsequences, and as an application of \eqref{stimatk} and Rellich–Kondrachov Theorem, one has that  $T_{k}(u_{n})$ is Cauchy in measure for all $k>0$. Then fixing $k$ large enough, one has that for each $\varepsilon>0$ there exist $n_{\varepsilon}>0$ such that
	$$
	|\{|u_{n}-u_{l}|\geq\eta\}|<\varepsilon\text{ for all }l,n>n_{\varepsilon},
	$$
	which implies that $u_{n}$ is a Cauchy sequence in measure. Thus, recalling Lemma \ref{lemma_controllobasso}, there exists a positive   measurable function $u$ to which $u_{n}$ converges (up to a subsequence)  a.e. \bk in $\Omega$ as $n\to\infty$. 
	From an application of the Fatou Lemma in \eqref{levelset0} and reasoning as above, we get that 	\begin{equation*}\label{e.Marcinone v2}
		|\{u>k\}|\leq \left(\frac{\mathcal{S}_p^{p}\mu\|f\|_{L^{1}(\Omega)}}{k^{p-1}}\right)^{\frac{N}{N-p}}\text{ for all }k>0.
	\end{equation*}
	This proves the result.
\end{proof}

\subsection{Existence for $\mu\geq \mu_{0}$}

We are now in position to prove  Theorem \ref{teo_existence}.

\begin{proof}[Proof of Theorem \ref{teo_existence}]
	
	We fix $\mu\geq \mu_0$ where $\mu_0$ is determined  in \bk Lemma  \ref{lemma_controllobasso}. We aim to  let \bk $n\to\infty$  in \bk the weak formulation of \eqref{pbapprox}. By Corollary \ref{cor_qo} we have that, up to a subsequence, $u_{n}\to u$  a.e. \bk in $\Omega$ as $n\to\infty$. Furthermore both $u_n$ and $u$ are positive thanks to Lemma \ref{lemma_controllobasso}.
	
	\medskip
	
	First we claim that $\mu f_n -(au_{n-1}+\frac{1}{n})^{-\gamma}$ converges in $L^1(\Omega)$ as $n\to\infty$ to $\mu f- au^{-\gamma}$  by applications of the Lebesgue Theorem; indeed it follows from Lemma \ref{lemma_controllobasso}, that for some positive constant $C$, one has 
	\begin{equation*}\label{L1}
		\displaystyle \frac{a(x)}{(u_{n-1}+\frac{1}{n})^\gamma} \stackrel{\eqref{comparison}}{\le} \frac{\|a\|_{L^\infty(\Omega)}}{C\varphi_1^{\frac{p\gamma}{p+\gamma-1}}} \in L^1(\Omega),
	\end{equation*}
	thanks to \eqref{phi equiv dist } and \eqref{distint} since $\gamma<1$.
	
	\medskip	
	
	Furthermore, the  a.e. \bk convergence of $\nabla u_{n}$, up to subsequences, towards $\nabla u$ in $\Omega$ as $n \to \infty$ is in force. Indeed, as $\mu f_n -(au_{n-1}+\frac{1}{n})^{-\gamma}$ converges in $L^1(\Omega)$ as $n\to\infty$,	one can apply  classical stability  results for nonlinear elliptic equations with $L^1$ data (\cite{bm,B1995}) in order to get  this convergence \bk. 
	\medskip
	 
 Thus, we can pass to the limit the principal part of the weak formulation of \eqref{pbapprox}. Indeed Lemma \ref{lem_priori} gives that $|\nabla u_n|^{p-1}$ is bounded in $M^{\frac{N}{N-1}}(\Omega)$ with respect to $n$ and then $|\nabla u_n|^{p-2}\nabla u_n$ converges to $|\nabla u|^{p-2}\nabla u$ in $L^r(\Omega)^N$ with $r<\frac{N}{N-1}$ as $n\to\infty$; this is sufficient to  let \bk $n\to\infty$ in  the \bk term of the weak formulation of \eqref{pbapprox} involving the principal operator.  This  allows us to deduce  that $u$ is a solution to \eqref{e.problem}.
	
	\medskip

	\medskip
	
	The regularity properties of the solution follow by  the \bk lower semicontinuity of the norm with respect to $n$ in the estimates of  Lemma \ref{lem_priori}. This concludes the proof. 
\end{proof}
\begin{proof}[Proof of Theorem \ref{theo pgeqN}] We now turn our attention to the proof of Theorem \ref{theo pgeqN}. Let $p\geq N$ and recall that the following embeddings hold for any $u\in W_0^{1,p}(\Omega)$
$$
\begin{aligned}
& \text{ if }p>N\ \ \ \text{then}\ \ \ \|u\|_{L^{\infty}(\Omega)}\leq c_{\infty} \|u\|_{W_0^{1,p}(\Omega)},\\\
&\text{ if } p=N \ \ \ \text{then}\ \ \ \|u\|_{L^{r}(\Omega)}\leq c_{r}  \|u\|_{W_0^{1,p}(\Omega)},\text{ for all }r>1.
\end{aligned}
$$ 
 We proceed as in the proof of Theorem \ref{teo_existence}. We fix $\mu\geq \mu_0$ where $\mu_0$ is determined in the Lemma  \ref{lemma_controllobasso}. We aim to  let \bk$n\to\infty$  in \bk the weak formulation of \eqref{pbapprox}. Choosing $u_n$ as a test function, and using the injection for $p>N$, we get
$$
\begin{aligned}
	\int_\Omega |\nabla u_{n}|^p \le \mu\int_\Omega f_n u_{n} &\le \mu \|u_n\|_{L^{\infty}(\Omega)}\int_{\Omega}f_n
	\\
	&\le \mu c_\infty \left(\int_\Omega f_n\right)\left(\int_\Omega |\nabla u_{n}|^p\right)^{\frac{1}{p}}.
\end{aligned} 
$$
  A similar estimate can be obtained under the hypothesis $p=N$ and $f\in L^q(\Omega)$ with $q>1$. Consequently, there exists $u\in W_0^{1,p}(\Omega)$ such that $u_n\to u$ a.e. in $\Omega$ and weakly in $W_0^{1,p}(\Omega)$. The result then follows by arguing as in the proof of Theorem \ref{teo_existence}. 
\end{proof}
\subsection{Non-existence for $0<\mu<\mu_{0*}$}

In this section we prove Theorem \ref{t.nonexistence}.

\begin{proof}[Proof of Theorem \ref{t.nonexistence}]
	By contradiction let us assume that there exists a  finite energy solution $u$ to problem  \eqref{e.problem} for any $\mu>0$. Notice in particular that, by \eqref{e.defweaksol},  one should have $u>0$ on $\Omega$. Then, applying Lemma \ref{Lemma_reg}, one can take $v=u$ into \eqref{e.c2}. This yields to
	$$
	\int_{\Omega}|\nabla u|^{p}+\int_{\Omega}au^{1-\gamma}\leq\mu\int_{\Omega}f u.
	$$
	Moreover, as $a\geq c_{0}$, $f\in L^{\infty}(\Omega)$ and,  observing that  $s\leq s^{1-\gamma}+s^{p}$ for all $s\geq0$, one deduces
	$$
	\int_{\Omega}|\nabla u|^{p}+c_{0}\int_{\Omega}u^{1-\gamma}\leq C\mu\int_{\Omega} (u^{1-\gamma}+u^p),
	$$
	where $C=\|f\|_{L^{\infty}(\Omega)}$. If $\mu<\frac{c_{0}}{C}$, then
	$$
	\int_{\Omega}|\nabla u|^{p}\leq C\mu\int_{\Omega}u^p.
	$$
	From \eqref{e.rayleigh}, we obtain
	$$
	(1-C\mu\lambda_{p}^{-1})\int_{\Omega}|\nabla u|^{p}\leq0,
	$$
	which means that if
	$$
	\mu<\min\biggl\{\frac{c_{0}}{C},\frac{1}{C\lambda_{p}^{-1}}\biggl\},
	$$
	then $u\equiv 0$. This provides a contradiction and the proof is concluded.  
\end{proof}

\section{Some compatibility conditions in the case  $\gamma=1$.}
\label{sec:gamma=1}

In this section we briefly comment problem \eqref{e.problem} in the extremal  case $\gamma=1$. By requiring some extra conditions on both $a$ and $f$  one \bk can show the existence of a positive solution to  
\begin{equation}\label{pbgamma1}
	\begin{cases}
		\displaystyle -\Delta_p u+ \frac{a(x)}{u}=\mu f(x) \ &\text{ in }\Omega,\\
		u =  0 \ &\text{ on }
		\partial\Omega,
	\end{cases}
\end{equation}
where $p>1$ and $\mu>0$ is large enough. Let $\overline{\varepsilon}$ a small parameter that will be fixed later; with respect to the previous sections we further assume that  $a\in L^{\infty}(\Omega)$ is a nonnegative function, not identically null,  such that 
\begin{equation}\label{growtha}
	a(x) \le \overline{a}\delta(x)^{\alpha} \ \ \forall x \in \Omega_{\overline{\varepsilon}},
\end{equation}
for some  constants \bk $\overline{a}>0$, $0<\alpha<1$   ($\Omega_{\overline\varepsilon}$ is defined in \eqref{bordo}).
Moreover $0< f \in L^1(\Omega)$ satisfies \eqref{e.hypf1} and the following growth assumption
\begin{equation}\label{growthf}
	f(x) \ge \overline{f} \delta(x)^{-s} \ \ \forall x \in \Omega_{\overline\varepsilon},
\end{equation}
for some $0<\overline{f}\le 1$, $0<s<1$.

\medskip 
Let us state the main existence theorem.

\begin{theorem}\label{teo_existencegamma=1}
	Let $ 1<p<N$, let $0\lneq a\in L^{\infty}(\Omega)$ satisfy \eqref{growtha}, let $f\in L^1(\Omega)$ satisfy both \eqref{e.hypf1} and \eqref{growthf} such that $\alpha+s\geq1$. Then there exists a positive $\mu_0$ such that for any $\mu\ge \mu_0$ there exists a positive solution $u$ to problem \eqref{pbgamma1}\bk.  Furthermore,  $u\in M^{\frac{N(p-1)}{N-p}}(\Omega)$ and $|\nabla u|\in M^{\frac{N(p-1)}{N-1}}(\Omega)$.
\end{theorem}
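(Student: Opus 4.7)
The strategy parallels that of Theorem \ref{teo_existence}, adapted to the threshold $\gamma=1$. The natural candidate for the barrier is now $v=t_0\varphi_1$, since the exponent $r=p/(p+\gamma-1)$ from Lemma \ref{lemma_controllobasso} reduces to $1$ as $\gamma\to 1^{-}$; the compatibility condition $\alpha+s\geq 1$ will enter the boundary estimate precisely to balance the growth of $a/v$ against the lower bound on $f$.

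\textbf{Approximation and subsolution.} The plan is to choose an approximating sequence $f_n\uparrow f$ in $L^1(\Omega)$ preserving simultaneously the two lower bounds $f_n\geq c_{\Omega\setminus\overline{\Omega_{\overline\varepsilon}}}$ on $\Omega\setminus\overline{\Omega_{\overline\varepsilon}}$ and $f_n\geq \overline f\,\delta^{-s}$ on $\Omega_{\overline\varepsilon}$ (for instance $f_n=\min\{f,n+\overline f\,\delta^{-s}\}$), and to solve iteratively the analogue of \eqref{pbapprox} with $\gamma=1$, starting from $u_0=t_0\|\varphi_1\|_{L^\infty(\Omega)}$. Since $u_{n-1}$ is positive and bounded below, the singular term on the previous iterate is a bounded datum, and each $u_n\in W^{1,p}_0(\Omega)$ exists by the usual Leray--Lions theory. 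The candidate $v=t_0\varphi_1$ satisfies, by $(p-1)$-homogeneity of $\Delta_p$ and \eqref{e.autofunc},
\[
-\Delta_p v+\frac{a(x)}{v}=t_0^{p-1}\lambda_p\varphi_1^{p-1}+\frac{a(x)}{t_0\varphi_1}.
\]
Splitting $\Omega=\Omega_{\overline\varepsilon}\cup(\Omega\setminus\overline{\Omega_{\overline\varepsilon}})$, in the interior $\varphi_1$ is bounded below by a positive constant and $a$ is bounded, so the right-hand side is uniformly bounded and can be dominated by $\mu f_n\geq \mu c_{\Omega\setminus\overline{\Omega_{\overline\varepsilon}}}$ for $t_0$ and $\mu$ sufficiently large, by estimates analogous to \eqref{tzero}. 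In the boundary layer $\Omega_{\overline\varepsilon}$, using \eqref{growtha}, \eqref{growthf} and \eqref{phi equiv dist }, one gets
\[
-\Delta_p v+\frac{a(x)}{v}\leq C_1 t_0^{p-1}\delta^{p-1}+\frac{\overline a}{t_0 c_1}\delta^{\alpha-1},\qquad \mu f_n\geq \mu\overline f\,\delta^{-s}.
\]
Since $p-1>0>-s$, the $\delta^{p-1}$ piece is negligible for $\overline\varepsilon$ small. The crucial term $\delta^{\alpha-1}$ is dominated by $\delta^{-s}$ precisely when $\alpha+s\geq 1$; in the borderline case $\alpha+s=1$ the residual prefactors force a joint largeness condition $\mu\geq \overline a/(t_0 c_1\overline f)$. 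Fixing $\overline\varepsilon$, $t_0$ and $\mu_0$ accordingly yields $-\Delta_p v+a/v\leq\mu f_n$ throughout $\Omega$, for every $n$ and every $\mu\geq\mu_0$.

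\textbf{Comparison, a priori estimates, and passage to the limit.} An induction identical to the one in Lemma \ref{lemma_controllobasso}, based on Lemma \ref{lem comparison}, then gives $u_n\geq v=t_0\varphi_1$ a.e.\ in $\Omega$ for every $n$. The a priori bounds of Lemma \ref{lem_priori} are $\gamma$-independent and yield uniform control of $u_n$ in $M^{N(p-1)/(N-p)}(\Omega)$ and of $|\nabla u_n|$ in $M^{N(p-1)/(N-1)}(\Omega)$; the Cauchy-in-measure argument of Corollary \ref{cor_qo} then produces a positive a.e.\ limit $u\geq t_0\varphi_1$, and a.e.\ convergence of the gradients follows from the $L^1$-stability theory for the $p$-Laplacian (\cite{bm,B1995}), once the $L^1$ convergence of the whole right-hand side is secured. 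The singular term is uniformly dominated by
\[
\frac{a(x)}{u_{n-1}+1/n}\leq\frac{a(x)}{t_0\varphi_1}\leq \frac{\overline a}{t_0 c_1}\,\delta^{\alpha-1}\quad\text{in }\Omega_{\overline\varepsilon},
\]
which belongs to $L^1(\Omega)$ by \eqref{distint} since $\alpha>0$, and is bounded away from $\partial\Omega$; dominated convergence then delivers the required $L^1$ convergence, and the principal part is handled exactly as in the proof of Theorem \ref{teo_existence}. The claimed Marcinkiewicz regularity of $u$ and $|\nabla u|$ descends by lower semicontinuity.

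\textbf{Main obstacle.} The genuinely new point is the boundary subsolution inequality: the hypothesis $\alpha+s\geq 1$ is sharp for dominating $\delta^{\alpha-1}$ by $\delta^{-s}$, and the approximation $f_n$ must be designed to retain the lower bound $\overline f\,\delta^{-s}$ uniformly in $n$, since a plain truncation $T_n(f)$ would destroy it precisely in the thin boundary layer where the subsolution argument is most delicate. Once this technicality is settled, the rest of the proof is a routine adaptation of the $\gamma<1$ scheme.
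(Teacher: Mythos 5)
Your overall strategy — barrier $v=t_0\varphi_1$, subsolution splitting into interior and boundary layer, the condition $\alpha+s\geq1$ entering precisely in the boundary estimate $\delta^{\alpha-1}\lesssim\delta^{-s}$, then the standard comparison, a priori bounds, and passage to the limit — is exactly the route the paper follows, so the core of your proposal is correct.

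One point you flag as the "main obstacle" is in fact not one, and your fix introduces a problem the paper avoids. You claim that the plain truncation $T_n(f)$ would destroy the lower bound \eqref{growthf} in the boundary layer and propose instead $f_n=\min\{f,n+\overline f\,\delta^{-s}\}$. But the paper keeps the same truncation $f_n=T_{n+c_{\Omega\setminus\overline{\Omega_{\overline\varepsilon}}}}(f)$ used for $\gamma<1$ and observes that it preserves the \emph{regularized} lower bound
\[
f_n(x)\ge\overline f\bigl(\delta(x)+\tfrac1n\bigr)^{-s}\quad\text{on }\Omega_{\overline\varepsilon},
\]
which is all that is needed: if $f(x)\le n+c$ then $f_n=f\ge\overline f\delta^{-s}\ge\overline f(\delta+1/n)^{-s}$, and if $f(x)>n+c$ then $f_n=n+c\ge n\ge\overline f\,n^{s}\ge\overline f(\delta+1/n)^{-s}$ since $\overline f\le1$ and $0<s<1$. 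The boundary subsolution inequality is then established against $\overline f(\delta+1/n)^{-s}$, exactly matching the $1/n$ appearing in $a/(v+1/n)$, and the compatibility condition $\alpha+s\ge1$ enters in the same place as in your argument. Your alternative $f_n=\min\{f,n+\overline f\,\delta^{-s}\}$ is potentially unbounded near $\partial\Omega$; with $L^1$ (rather than $L^\infty$) data the Leray--Lions step no longer hands you $u_n\in W^{1,p}_0(\Omega)\cap L^\infty(\Omega)$, which is what makes the iteration and the comparison via Lemma \ref{lem comparison} (which requires $W^{1,p}_0$ competitors) go through cleanly. So keep the paper's bounded truncation and work with the regularized lower bound; the rest of your argument then goes through unchanged.
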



\begin{proof}[Proof of Theorem \ref{teo_existencegamma=1}]
As in several parts the proof strictly follows the ideas presented in Section \ref{sec_ex<1}, here we present  a brief sketch  focusing on the main differences   with respect to the one of Theorem \ref{teo_existence}.

\medskip

The scheme of approximation is   given again by \eqref{pbapprox}, namely $u_{0}=t_0 \|\varphi_1\|_{L^\infty(\Omega)} \ \text{ in }\Omega$, and for $n>0$
\begin{equation}\label{pbapproxgamma1}
	\begin{cases}
		\displaystyle -\Delta_p u_{n}+ \frac{a(x)}{|u_{n-1}|+\frac{1}{n}}=\mu f_n(x)&\text{ in }\Omega,\\
		u_{n} =  0 &\text{ on }
		\partial\Omega,
	\end{cases}
\end{equation}
where $f_n=T_{n+c_{\Omega\setminus\overline{\Omega_{\overline{\varepsilon}}}}}(f)$. Let us observe that $f_n$ satisfies both \eqref{condfn} and
\begin{equation}\label{growthfn}
f_n(x) \ge \overline{f}  \left(\delta(x)+\frac{1}{n}\right)^{-s} \ \forall x \in \Omega_{\overline\varepsilon}, \ \forall n \in \mathbb{N}.
\end{equation}
The existence of such $u_{n} \in W^{1,p}_0(\Omega)\cap L^\infty(\Omega)$ solution to \eqref{pbapprox} follows again  from \cite{ll} for any $n \in \mathbb{N}$. 

As in Lemma \ref{lemma_controllobasso}, we  aim to show that, for some positive constant $C$, we have  
\begin{equation}\label{confrontogamma=1}
 u_n \ge C\varphi_{1} \ \forall n \in\mathbb{N}.
\end{equation}
Indeed, the main differences rely on showing that in $\Omega$ it holds
\begin{equation}\label{subsol1}
	-\Delta_p v+\frac{a(x)}{v+\frac{1}{n}}\leq \mu f_n(x), 
\end{equation} 
where $v=t\varphi_1$ for some $t>0$. We show it separately on both  $ \Omega\setminus\overline{\Omega_{\overline{\varepsilon}}}$  and  $\Omega_{\overline\varepsilon}$. 
 
We start by  $ \Omega\setminus\overline{\Omega_{\overline{\varepsilon}}}$ where we can assume that
$$\frac{a(x)}{t\varphi_{1}} \le t^{p-1}\lambda_p\varphi_{1}^{p-1},$$
by requiring
\begin{equation*}
	t\ge t_0:=\left(\frac{\|a\|_{L^\infty(\Omega)}}{\displaystyle \lambda_p\min_{ \Omega\setminus\overline{\Omega_{\overline{\varepsilon}}}}(\varphi_{1}^{p})}\right)^{\frac{1}{p}}.
\end{equation*}
  Therefore,   we have
\begin{equation}\label{subdentrogamma=1}
	-\Delta_p v +\frac{a(x)}{v+\frac{1}{n}} \leq 2t_0^{p-1}\lambda_p\varphi_{1}^{p-1} \le c_{ \Omega\setminus\overline{\Omega_{\overline{\varepsilon}}}}\mu \stackrel{\eqref{e.hypf1}}{\le} \mu f_n(x)\text{ in }  \Omega\setminus\overline{\Omega_{\overline{\varepsilon}}},
\end{equation}
where we required that
\begin{equation}\label{mu0gamma1}
	\displaystyle \mu\ge \mu_0:=\frac{2t_0^{p-1}\lambda_p\|\varphi_{1}\|_{L^\infty(\Omega)}^{p-1}}{c_{\Omega\setminus\overline{\Omega_{\overline{\varepsilon}}}}}.
\end{equation}

Now we focus on proving \eqref{subsol1} in $\Omega_\varepsilon$, for $\varepsilon\leq \overline\varepsilon$; thanks to \eqref{growtha}, one has
  \begin{equation}\label{sub2gamma=1}
  	-\Delta_p v+\frac{a(x)}{v+\frac{1}{n}} \leq \frac{\overline{a}\delta(x)^{\alpha}}{t_0\varphi_1+\frac{1}{n}}+t_0^{p-1}\lambda_p\varphi_{1}^{p-1}.
  \end{equation}
  For the right-hand of \eqref{sub2gamma=1}, observe that, as for \eqref{esplode}, by  \eqref{phi equiv dist } one has \begin{equation}\label{esplode1}\frac{c_{3}}{\varepsilon}\leq t_{0}\leq\frac{c_{4}}{\varepsilon}\end{equation} for $c_3,c_4>0$ independent of $\varepsilon$; in particular, without  losing \bk generality, we assume $\varepsilon$ small enough so that 
  \begin{equation}\label{t0c1}
  	t_0c_1\ge 1
  \end{equation}
  where $c_1$  is the constant given into \bk \eqref{phi equiv dist }.

  Using both \eqref{phi equiv dist } and \eqref{growthfn}, and  the fact that $\alpha+s\ge 1$, \bk one has
  \begin{equation*}
  \begin{aligned}
  \frac{\overline{a}\delta(x)^{\alpha}}{t_0\varphi_1+\frac{1}{n}}+t_0^{p-1}\lambda_p\varphi_{1}^{p-1} & \stackrel{\eqref{growthfn}}{\leq}\left(\frac{\overline{a}\delta(x)^{\alpha}}{t_0\varphi_1+\frac{1}{n}}\right)\frac{f_{n}(x)}{\overline{f}\left(\delta(x)+\frac{1}{n}\right)^{-s}}+t_0^{p-1}\lambda_p\varphi_{1}^{p-1}\frac{f_{n}(x)}{\overline{f}\left(\delta(x)+\frac{1}{n}\right)^{-s}} \\
  &\stackrel{\eqref{phi equiv dist }-\eqref{t0c1}}{\leq}\overline{a}\delta(x)^{\alpha}\left(\frac{1}{t_0c_1\delta(x)+\frac{1}{n}}\right)^{1-s}\frac{f_{n}(x)}{\overline{f}}+t_0^{p-1}\lambda_p\varphi_{1}^{p-1}\frac{f_{n}(x)}{\overline{f}\left(\delta(x)+\frac{1}{n}\right)^{-s}}\\
  &\stackrel{\eqref{phi equiv dist }-\eqref{esplode1}}{\le}\left(\frac{\overline{a}}{\overline{f} t_0^{1-s}c_1^{1-s}}\delta(x)^{\alpha+s-1}+\frac{c_4^{p-1}c_{2}^{p-1}\lambda_{p}}{\overline{f}}\left(\delta(x)+\frac{1}{n}\right)^s\right)f_n(x)\\
  &\stackrel{\eqref{esplode1}}{\leq} \tilde{C}\left(\varepsilon^{\alpha}+\left(\varepsilon+\frac{1}{n}\right)^s\right) f_n(x)\text{ in }\Omega_{\varepsilon},\\
 \end{aligned} 
 \end{equation*}
where
$$
\tilde{C}=\frac{\overline{a}}{\overline{f} c_1c_3^{1-s}}+\frac{\lambda_{p}c_{2}^{p-1}c_4^{p-1}}{\overline{f}}.
$$
Thanks to the assumptions on $\alpha$ and $s$, one can fix $\overline{\varepsilon}$ small enough so that
  \begin{equation*}\label{mu0gamma12}
 \tilde{C}\left(\overline{\varepsilon}^{\alpha}+\left(\overline{\varepsilon}+\frac{1}{n}\right)^s\right) \le \mu_0.
  \end{equation*}
   Then, gathering the previous inequalities one has
  \begin{equation}\label{subfuorigamma=1}
  	-\Delta_p v +\frac{a(x)}{v+\frac{1}{n}} \leq  \mu f_n(x)\text{ in } \Omega_{\overline{\varepsilon}},
  \end{equation}
  under the request \eqref{mu0gamma1} (recall from now on $\Omega_{\varepsilon } =\Omega_{\overline{\varepsilon}}$ is fixed). Hence \eqref{subdentrogamma=1} and \eqref{subfuorigamma=1} give that \eqref{subsol1} holds. Once that \eqref{subsol1} is in force, the proof of \eqref{confrontogamma=1} is identical to the one in  Lemma \ref{lemma_controllobasso}.
   
  Now observe that an application of Lemma \ref{lem_priori} (case \textit{ii)}) gives that $u_n$ is uniformly bounded in $M^{\frac{N(p-1)}{N-p}}(\Omega)$ and $|\nabla u_n|$ is uniformly bounded in $M^{\frac{N(p-1)}{N-1}}(\Omega)$. 
  
  From here on the proof is almost identical to the one of the proof of Theorem \ref{teo_existence}. Indeed in order to pass to the limit in the singular term of \eqref{pbapproxgamma1} as $n\to\infty$,  we may  observe that, for some positive constant $C$, one has 
  \begin{equation}\label{app_leb}
  	\frac{a(x)}{u_{n-1} +\frac{1}{n}} \stackrel{\eqref{confrontogamma=1}}{\le} \frac{a(x)}{C\varphi_1} \in L^1(\Omega),
  \end{equation}
  thanks to \eqref{growtha}. Then \eqref{app_leb} allows to apply the Lebesgue Theorem on the singular term as $n\to\infty$. This concludes the proof.
  
\end{proof}

The case of a generic bounded function $a$ in the lower order term  of \eqref{pbgamma1} (i.e. $\alpha=0$ in \eqref{growtha}) is quite exceptional. In this case, it is not generally true that $a{\varphi_1^{-1}}$ belongs to $L^{1}(\Omega)$ and the previous proof does not apply. Surprisingly in this case the existence of a solution of \eqref{pbgamma1} is hampered by the fact that $f$ is too  ``regular" \bk (say summable) near the boundary.

\subsection{Non-existence result for $\gamma=1$}
A non-existence result analogous to the one in Theorem \ref{t.nonexistence} can be proven  for $\gamma =1$ even for an unbounded $f$. We have the following:
\begin{theorem}\label{teononexistencegamma1}
 Let $p>1$, $p^\prime=\frac{p}{p-1}$, assume \bk that $0\leq a\in L^{1}(\Omega)$ is a non-trivial function and that $f\in L^{p'}(\Omega)$. Then there exists $\mu_{0^{*}}>0$ such that problem \eqref{pbgamma1} does not admit any  non-trivial finite energy solution $u\in W_{0}^{1,p}(\Omega)$ for all $0<\mu\leq\mu_{0^{*}}$.
\end{theorem}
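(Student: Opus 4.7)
The plan is to mimic the argument of Theorem \ref{t.nonexistence}, but replace the elementary estimate $s\le s^{1-\gamma}+s^{p}$ used there with a direct application of Young's inequality together with the Rayleigh characterisation of $\lambda_{p}$. The starting point is to show that any non-trivial finite energy solution $u\in W_{0}^{1,p}(\Omega)$ of \eqref{pbgamma1} satisfies the weak formulation against $v=u$ itself, namely
$$\int_{\Omega}|\nabla u|^{p}+\int_{\Omega}a\;=\;\mu\int_{\Omega}fu,$$
where one crucially uses that $\tfrac{a}{u}\cdot u=a\in L^{1}(\Omega)$. For $1<p<N$ this admissibility follows from Lemma \ref{Lemma_reg} applied with $\gamma=1$, since on a bounded domain the hypothesis $f\in L^{p'}(\Omega)$ is stronger than $f\in L^{\frac{Np}{Np-N+p}}(\Omega)$. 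For $p\geq N$, an entirely analogous mollification--truncation argument works, relying on the Sobolev embedding $W_{0}^{1,p}(\Omega)\hookrightarrow L^{q}(\Omega)$ for every $q<\infty$ (respectively into $L^{\infty}(\Omega)$ when $p>N$).

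Once the identity above is in force, I would combine H\"older's inequality, the Poincar\'e-type bound $\|u\|_{L^{p}(\Omega)}^{p}\leq\lambda_{p}^{-1}\|\nabla u\|_{L^{p}(\Omega)}^{p}$ coming from \eqref{e.rayleigh}, and Young's inequality with conjugate exponents $(p,p')$, to deduce
$$\mu\int_{\Omega}fu\;\leq\;\mu\,\|f\|_{L^{p'}(\Omega)}\lambda_{p}^{-1/p}\|\nabla u\|_{L^{p}(\Omega)}\;\leq\;\frac{1}{p}\int_{\Omega}|\nabla u|^{p}+\frac{\mu^{p'}}{p'}\lambda_{p}^{-p'/p}\|f\|_{L^{p'}(\Omega)}^{p'}.$$
Absorbing the gradient term on the left-hand side yields
$$\frac{1}{p'}\int_{\Omega}|\nabla u|^{p}+\int_{\Omega}a\;\leq\;\frac{\mu^{p'}}{p'}\,\lambda_{p}^{-p'/p}\,\|f\|_{L^{p'}(\Omega)}^{p'}.$$

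Since $a\geq 0$ is non-trivial, $\int_{\Omega}a>0$, so it suffices to set
$$\mu_{0^{*}}\;:=\;\left(\frac{p'\int_{\Omega}a}{\lambda_{p}^{-p'/p}\|f\|_{L^{p'}(\Omega)}^{p'}}\right)^{1/p'}$$
(the case $f\equiv 0$ being trivial, as then the equation reduces to $-\Delta_{p}u+a/u=0$, which admits no non-trivial finite energy solution when $a$ is non-trivial). For any $0<\mu\leq\mu_{0^{*}}$ the previous inequality forces $\int_{\Omega}|\nabla u|^{p}\leq 0$, hence $u\equiv 0$, contradicting the non-triviality of $u$. The only genuinely delicate step is the extension of the weak formulation to the test function $v=u$: the singular exponent $\gamma=1$ is critical and $a/u$ need not be globally integrable; however, the product $(a/u)\,u=a$ does lie in $L^{1}(\Omega)$, which is precisely the ingredient that allows the mollification-truncation scheme underlying Lemma \ref{Lemma_reg} to push through under the summability hypothesis $f\in L^{p'}(\Omega)$.
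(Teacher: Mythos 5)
Your proposal is correct and follows essentially the same route as the paper: apply Lemma \ref{Lemma_reg} to enlarge the class of admissible test functions, take $v=u$ to obtain $\int_\Omega|\nabla u|^p+\int_\Omega a=\mu\int_\Omega fu$, then estimate the right-hand side via H\"older/Young and Poincar\'e and absorb the gradient term. The only cosmetic difference is how the factor $\mu$ is distributed in Young's inequality (the paper splits $\mu fu=(\mu^{1/p'}f)(\mu^{1/p}u)$, leading to $\mu_{0^*}=\min\{p\lambda_p,\,A/F\}$, while you keep $\mu$ entirely on the $f$-side and obtain a single power-law threshold); you are also slightly more careful than the paper in flagging that Lemma \ref{Lemma_reg} is stated only for $1<p<N$ and in disposing of the trivial case $f\equiv 0$.
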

\begin{proof}[Proof of Theorem \ref{teononexistencegamma1}] Assume by contradiction that $u\in W_{0}^{1,p}(\Omega)$ is a solution of problem \eqref{pbgamma1}. 
Let
$$
A:=\int_{\Omega}a>0 \qquad \text{and}\qquad  F:=\frac{1}{p'}\int_{\Omega}f^{p'}>0
$$	Since $u\in W_{0}^{1,p}(\Omega)$ and $f\in L^{p'}(\Omega)$, we may apply Lemma \ref{Lemma_reg} to conclude that \eqref{e.c2} holds with $\gamma=1$. Thus, choosing  $v=u$ in \eqref{e.c2}, we get
$$
\int_{\Omega}|\nabla u|^p+A=\mu\int_{\Omega}fu.
$$
We estimate the right-hand side using Young's inequality with exponents $p$ and $p'$. By a further use of Poincar\'e's inequality (recall \eqref{e.rayleigh}), we get
$$
\left(1-\frac{\mu}{p\lambda_p}\right)\int_{\Omega}|\nabla u|^p\,+A\leq\mu F.
$$
Consequently, if
$$
\mu\leq \mu_{0^*}:=\min\biggl\{p\lambda_{p},\frac{A}{F}\biggl\},
$$
one has  $$\int_{\Omega}|\nabla u|^p\,\leq0,$$ and thus $u\equiv 0$. This concludes the proof

\end{proof}
\section*{Declarations:}

\medskip
{\bf Funding:}
F. Oliva and F. Petitta are partially supported by the Gruppo Nazionale per l’Analisi Matematica, la Probabilità e le loro Applicazioni (GNAMPA) of the Istituto Nazionale di Alta Matematica (INdAM). M.F. Stapenhorst is partially  supported by FAPESP 2022/15727-2 and 2021/12773-0.

\medskip
{\bf Conflict of interest declaration:}
The authors declare no competing interests.

\medskip
{\bf Data availability statement:}
 We do not analyse or generate any datasets, because our work
proceeds within a theoretical and mathematical approach. One
can obtain the relevant materials from the references below.  

\medskip
{\bf Ethical Approval:} not applicable

\medskip
 {\bf Acknowledgements:} The authors thank the anonymous referee for the suggestions, which helped to improve the manuscript.\bk

\end{document}